\documentclass[11pt,pdflatex,sn-mathphys-num]{sn-jnl}
\usepackage{amsfonts,amssymb,amsmath}
\usepackage{graphicx}
\usepackage{epstopdf}
\usepackage{algorithmic}
\usepackage{booktabs}
\usepackage{wrapfig}
\usepackage{tikz,pgfplots}
\usepackage{pgfplotstable}
\usepackage[inline]{enumitem}
\usepgfplotslibrary{fillbetween}
\usetikzlibrary{pgfplots.groupplots}
\pgfplotsset{compat=1.18}
\usetikzlibrary{arrows,shapes,trees,calc}
\usetikzlibrary{decorations,decorations.markings}

\pgfkeys{/tikz/.cd,
    contour distance/.store in=\ContourDistance,
    contour distance=-10pt, %
    contour step/.store in=\ContourStep,
    contour step=1pt,
}

\pgfdeclaredecoration{closed contour}{initial}
{%
\state{initial}[width=\ContourStep,next state=cont] {
    \pgfmoveto{\pgfpoint{\ContourStep}{\ContourDistance}}
    \pgfcoordinate{first}{\pgfpoint{\ContourStep}{\ContourDistance}}
    \pgfpathlineto{\pgfpoint{0.3\pgflinewidth}{\ContourDistance}}
    \pgfcoordinate{lastup}{\pgfpoint{1pt}{\ContourDistance}}
    
  }
  \state{cont}[width=\ContourStep]{
     \pgfmoveto{\pgfpointanchor{lastup}{center}}
     \pgfpathlineto{\pgfpoint{\ContourStep}{\ContourDistance}}
     \pgfcoordinate{lastup}{\pgfpoint{\ContourStep}{\ContourDistance}}
  }
  \state{final}[width=\ContourStep]
  { %
    \pgfmoveto{\pgfpointanchor{lastup}{center}}
    \pgfpathlineto{\pgfpointanchor{first}{center}}
  }
}

\makeatletter
\newtheoremstyle{thmstyleone}%
{10pt plus2pt minus1pt}%
{10pt plus2pt minus1pt}%
{\itshape}%
{0pt}%
{\bfseries}%
{}%
{.5em}%
{\thmname{#1}\thmnumber{\@ifnotempty{#1}{ }\@upn{#2}}%
  \thmnote{ {\the\thm@notefont(#3)}}}%
\newtheoremstyle{thmstyletwo}%
{10pt plus2pt minus1pt}%
{10pt plus2pt minus1pt}%
{\normalfont}%
{0pt}%
{\itshape}%
{}%
{.5em}%
{\thmname{#1}\thmnumber{\@ifnotempty{#1}{ }{#2}}%
  \thmnote{ {\the\thm@notefont(#3)}}}%
\newtheoremstyle{thmstylethree}%
{10pt plus2pt minus1pt}%
{10pt plus2pt minus1pt}%
{\normalfont}%
{0pt}%
{\bfseries}%
{}%
{.5em}%
{\thmname{#1}\thmnumber{\@ifnotempty{#1}{ }\@upn{#2}}%
  \thmnote{ {\the\thm@notefont(#3)}}}%
\AtBeginDocument{%
  \renewenvironment{spiproof}[1][\proofname]{\par\removelastskip%
    \pushQED{\qed}%
    \normalfont \topsep7.5\p@\@plus7.5\p@\relax%
    \trivlist%
    \item[\hskip\labelsep\itshape #1\@addpunct{}]\ignorespaces%
  }{%
    \popQED\endtrivlist\@endpefalse%
  }%
}%
\makeatother

\makeatletter
\g@addto@macro\abstractfont{\mathversion{normal}}
\g@addto@macro\keywordfont{\mathversion{normal}}
\makeatother

\theoremstyle{thmstyleone}%
\newtheorem{theorem}{Theorem}%
\newtheorem{lemma}[theorem]{Lemma}%
\newtheorem{corollary}[theorem]{Corollary}%

\theoremstyle{thmstyletwo}%

\theoremstyle{thmstylethree}%

\newcommand{\bs}[1]{\ensuremath{\boldsymbol{#1}}}

\usepackage{cleveref}

\usepackage{amsopn}

\DeclareMathOperator{\sym}{sym}

\begin{document}

\title[]
{{\Large Transform before linearizing: robust Newton methods for singular $p$-Laplace and $p$-Stokes equations}}

\author[1,2]{\fnm{Gonzalo G.} \sur{De Diego}}\email{gg2924@nyu.edu}

\author[1]{\fnm{Fortino} \sur{Garcia}}\email{foga8952@colorado.edu}

\author[1]{\fnm{Georg} \sur{Stadler}}\email{stadler@cims.nyu.edu}

\author[1]{\fnm{Kewei} \sur{Wang}}\email{kw3645@nyu.edu}

\affil[1]{\orgdiv{Courant Institute School of Mathematics, Computing and Data Science}, \orgname{New York University},
\city{New York},
\state{NY}, \country{USA}}

\affil[2]{\orgdiv{ETSIT}, \orgname{Technical University of Madrid (UPM)}, \city{Madrid}, \country{Spain}}

\abstract{For $1<p<2$, the $p$-Laplace equation and its $p$-Stokes
generalization are difficult to solve numerically. Newton's method converges rapidly only close to the solution, with iteration counts that grow under mesh refinement and deteriorate as $p\to1$. The more robust Picard iteration converges only linearly.
Rather than globalizing or preconditioning Newton's method, we modify the system to which it is applied. We \emph{lift} the equation by introducing the flux $|\nabla u|^{p-2}\nabla u$
as an auxiliary (or ``lifting'') variable, apply a nonlinear \emph{transformation} to
the resulting constitutive relation, \emph{linearize}, and \emph{eliminate} the auxiliary variable by static condensation. Lifting alone leaves the linearization
unchanged; it is the preceding transformation that yields the new method. The elimination is algebraic and pointwise at the quadrature points, so the flux variable is never discretized, no inf-sup condition or indefinite system arises, and the cost per iteration is that of a standard Newton step. Together with a pointwise feasibility bound on the lifting variable that keeps the diffusion tensor uniformly positive definite, this yields an iteration that we prove, in finite dimensions and for the $p$-Laplace equation, to converge globally and locally at a quadratic rate; a one-dimensional model problem explains why the lagged flux variable removes the zig-zag behavior of standard Newton for $p$ close to one. Firedrake-based experiments for $p$-Laplace problems in two and three dimensions and for stationary and time-dependent $p$-Stokes flows show iteration counts largely insensitive to $p$ and to mesh refinement, and up to an order of magnitude fewer iterations than standard Newton for $p$ close to one.}

\keywords{
$p$-Laplace equation,
$p$-Stokes equations,
Newton's method,
primal--dual lifting,
global convergence}

\pacs[MSC Classification]{
65N22, %
49M15, %
35J92, %
65H10, %
65N30 %
}

\maketitle

\section{Introduction}
\label{sec:lifting}
For $p\in(1,\infty)$, the $p$-Laplace equation 
\begin{equation}
\label{eq:p-laplace-intro-toplevel}
-\nabla\cdot\!\bigl(|\nabla u|^{p-2}\nabla u\bigr) = f
\quad\text{in } \Omega,
\end{equation}
defined over a domain $\Omega\subset \mathbb R^d$ is the prototypical quasilinear elliptic problem. It arises as the Euler--Lagrange equation of the $p$-Dirichlet energy $J(u)=\tfrac{1}{p}\!\int_{\Omega}\!|\nabla u|^{p}\,\mathrm{d}\bs x
   - \!\int_{\Omega}\! f\,u\,\mathrm{d}\bs x$.
For $p=2$, it reduces to the linear Poisson problem; for any other
exponent, it is genuinely nonlinear.
Operators similar to the $p$-Laplacian
occur in a wide range of applications:
power-law shear-thinning or thickening models for non-Newtonian fluids, ice-sheet dynamics under Glen's flow corresponding to $p=4/3$ \cite{CuffeyPaterson10,glaciology-Newton}, problems in geodynamics \cite{TurcotteSchubert02}, 
and motion by curvature in the limit $p\to\infty$ \cite{KohnSerfaty06,Oberman13}. 

Despite its seemingly simple form, \eqref{eq:p-laplace-intro-toplevel}
is notoriously difficult to solve numerically. Two features account for much of this difficulty. First, the diffusion coefficient $|\nabla u|^{p-2}$ concentrates the nonlinearity in a quantity that becomes \emph{singular} for $1<p<2$, i.e., it diverges at points where $\nabla u=0$, and \emph{degenerate} for $p>2$, where the linearized operator loses ellipticity.
Second, solutions are not classically smooth: weak solutions are only known to be $C^{1,\alpha}$ for some $\alpha\in(0,1)$, and regularity properties deteriorate as $p$ moves away from $2$. Both features have consequences at the numerical level, where the most commonly used iterative schemes to solve \eqref{eq:p-laplace-intro-toplevel} are \emph{Newton} and \emph{Picard} methods.

Newton's method linearizes the nonlinear operator on the left hand side of \eqref{eq:p-laplace-intro-toplevel}
at a current iterate $u_k$. Writing the unit gradient direction $\bs{w}_k := \nabla u_k/|\nabla u_k|$ (and, for now, assuming that the gradient does not vanish), the Newton increment $\hat u$ solves
\begin{equation}
\label{eq:newton-intro}
-\nabla\cdot\left(|\nabla u_k|^{p-2}
\bigl(I + (p-2)\,\bs{w}_k\bs{w}_k^\top\bigr)\nabla\hat u\right)
= \nabla\cdot\bigl(|\nabla u_k|^{p-2}\nabla u_k\bigr) + f
\quad\text{in }\Omega,
\end{equation}
followed by the update $u_{k+1} := u_k + \alpha\hat u$ with step length $\alpha\in(0,1]$.

Instead, Picard freezes the diffusion coefficient $|\nabla u|^{p-2}$ at the current iterate $u_k$ and computes $\hat u$ by solving the linear problem
\begin{equation}
\label{eq:picard-intro}
-\nabla\cdot\left(|\nabla u_k|^{p-2}\nabla \hat u\right) = \nabla\cdot\bigl(|\nabla u_k|^{p-2}\nabla u_k\bigr) + f
\quad\text{in }\Omega,
\end{equation}
followed again by $u_{k+1} := u_k + \alpha\hat u$ with $\alpha\in (0,1]$.
A common, mathematically equivalent variant of \eqref{eq:picard-intro} is to write it directly in terms of $u_{k+1}$ under the assumption $\alpha=1$ (which is the typical choice for Picard).
Comparing \eqref{eq:newton-intro} and \eqref{eq:picard-intro}, it can be seen that the only difference is the anisotropic term $(p-2)\,\bs{w}_k\bs{w}_k^\top$, which can cause challenges for robust convergence.

The main challenges for Newton solvers based on \eqref{eq:newton-intro} (or an appropriate regularized form) are their \emph{small basin of fast convergence} and the resulting \emph{mesh-dependent iteration counts}. Newton's method converges quadratically only from initial iterates very close to the solution, with the radius of the guaranteed-convergence ball shrinking rapidly as the level of nonlinearity increases \cite{Deuflhard04,schmidt-pStokes-global}. This occurs when $p$ deviates from $2$, especially in the regime $1<p< 2$, which is the primary case we consider.
Damped, line-search-globalized, or semismooth Newton iterations are therefore essential in practice, but they typically require a large number of short steps before the iterate enters the quadratic regime---if it does at all. 
As a consequence, Newton solvers typically exhibit iteration
counts that grow under mesh refinement and depend severely on $p$
\cite{HuangLiLiu07, Loisel20, HeinleinKlawonnLanser22}.

A common alternative to Newton's method is the Picard method \eqref{eq:picard-intro}, which comes with its own drawbacks. It is more robust
than Newton globally, but it converges only linearly, with a contraction
factor that worsens as $p\to 1$.  Continuation strategies in $p$
\cite{HuangLiLiu07} alleviate this issue at the price of solving a sequence
of auxiliary problems.

\subsection{Approach and related literature}
In this paper, we propose the following strategy for singular $p$-Laplace and $p$-Stokes problems, i.e., $1<p<2$:  Rather than modifying Newton's or Picard's method by means of a preconditioner or continuation, we modify the system on which Newton's method operates: we
(1) \emph{lift} the original nonlinear equation to a
higher-dimensional space by introducing an auxiliary variable (here, the dual variable) that exposes the nonlinearity through a pointwise algebraic identity; (2) perform a nonlinear \emph{transformation} of the lifted system that tames the offending nonlinearity; (3) \emph{linearize} to derive Newton's method in the lifted space; and (4) \emph{eliminate} the auxiliary variable from the resulting linear system by static condensation.  The dual variable from the underlying convex-energy formulation provides a natural choice of lifting variable. For the $p$-Laplace equation \eqref{eq:p-laplace-intro-toplevel}, this is the flux $\bs{\lambda}=|\nabla u|^{p-2}\nabla u$, and for our generalization to $p$-Stokes problems, it is the deviatoric stress tensor.  Because the auxiliary unknown is eliminated before solving, the 
cost per iteration matches that of the standard Newton (or even Picard) method, and existing Newton solvers only require minor adjustments. As we will show, our approach exhibits a significantly larger region of rapid convergence,
with iteration counts that are largely unaffected by $p$ or mesh refinement.

Our approach is related to several existing lines
of research.  The lifted Newton method proposed in
\cite{AlbersmeyerDiehl10}, originally motivated by
direct multiple-shooting methods for optimal control
\cite{BockPlitt84},
likewise introduces auxiliary variables to expose nonlinearity, but does so
primarily in the context of time-dependent problems.  Nonlinear
elimination~\cite{lanzkron1996analysis} and composed nonlinear
solvers~\cite{brune2015composing} follow the philosophy of
restructuring the nonlinear system rather than its linearization.
Nonlinear preconditioning through domain decomposition
\cite{Gander17, DoleanGanderKherijiEtAl16, LiuKeyes16,
LiuHwangLuoEtAl22, HeinleinKlawonnLanser22} is complementary to our
approach in that it focuses iterations on locally difficult
subdomains; our lifting approach addresses the source of the difficulty directly.

In the context of nonlinear PDE solvers, related primal--dual and stress-lifting variants of Newton's method have been proposed for other nonlinear PDEs with singular constitutive laws.
For Bingham flow, whose constitutive law is singular below a yield stress and is therefore closely analogous to the $p$-Laplacian with $p<2$, \cite{DelosreyesGonzalez09} employs
a semismooth Newton method formulated using the dual stress. Close to our framework is the stress--velocity Newton scheme of \cite{RudiShihStadler20,Rudi18} for viscoplastic geodynamic Stokes flow, which treats the deviatoric stress as an independent variable prior to its elimination. The abstract form of that construction in \cite[Sec 5.3]{Rudi18} shares the steps of the strategy above and studies it for a term of $L^1$-type. It
explains the degeneracy of the standard linearization with its coefficient tending to an orthogonal projector with a vanishing eigenvalue along one direction, resulting in small step sizes.
Closely related, \cite{ShihMehlmannLoschEtAl22} proposes a primal--dual Newton--Krylov method for a viscous-plastic sea-ice model.
Instead of introducing a lifted stress variable, the authors of \cite{ZampiniBoffiDovletovMarkowich26} evolve a scalar lifted field that tracks $|\nabla u|^2$ via a gradient-flow evolution, recovering the classical Ka\v{c}anov iteration (which requires solving systems similar to \eqref{eq:picard-intro}), as a special case. They demonstrate robustness and iteration numbers competitive with Newton's method (which, as we argue here, is not a robust method, particularly for $p$ close to 1).
Retaining the stress as an independent unknown is also the premise of the implicit constitutive theory of non-Newtonian fluids~\cite{BulicekGwiazdaMalekEtAl12}, in which the constitutive law is a relation between the stress and the symmetric velocity gradient that need not be solvable for either variable; our lifting exploits the same structural freedom, but at the level of the linearization rather than the model.
Collectively, these studies indicate that reformulating the constitutive relation via an auxiliary variable is a promising approach for nonlinear PDE solvers. In this work, we develop a lifting-based method for the $p$-Laplace and $p$-Stokes systems and provide arguments supporting its superior convergence behavior.

Upon discretization and replacing the gradient operator with a general matrix $A$, the discrete $p$-Dirichlet energy associated with \eqref{eq:p-laplace-intro-toplevel} becomes an $\ell_p$-regression problem of the form $\min_{\bs x} \|A\bs x - \bs b\|_p^p$. 
This class of problems has been widely examined in optimization and theoretical computer science. Our lifting approach is most closely related to the interior-point framework of \cite{andersen2000primaldual}, which applies Newton's method to a coupled primal/dual feasibility system exhibiting a similar lift--linearize--condense pattern. In the limit $p \to 1$, primal--dual algorithms for total variation~\cite{chan1999nonlinear,HintermullerStadler06} introduce a similar dual variable combined with a feasibility modification, directly paralleling the feasibility projection we employ for the lifting variable. For the discrete $p$-Laplacian specifically, \cite{Loisel20} gives an algorithm that exploits the convex variational structure of the problem to compute a solution to high accuracy in polynomial time.

\subsection{Main contributions and limitations}
The main \emph{contributions} we make in this work are as follows: 
(1) Our approach is a simple modification of Newton's method that can be implemented in standard automated finite element software. It neither requires a mixed discretization nor the solution of indefinite systems.
(2) We show that the discretized iteration converges globally, with line search and pointwise diffusion tensor modification, for $1<p<2$. It is also locally quadratically convergent; 
an analysis of a one-dimensional model problem explains why the lagged lifting variable suppresses the zig-zag behavior of standard Newton in flat regions.
(3) Our publicly available implementations (in the finite element library Firedrake) show iteration counts largely insensitive to mesh refinement and to $p\to1$, in two and three dimensions for $p$-Laplace equations. Our generalizations to $p$-Stokes and time-dependent $p$-Navier--Stokes indicate an up to order-of-magnitude reduction in required linear solves compared to standard Newton for $p$ close to $1$.

The main \emph{limitations} are:
(1) The lifting approach is most advantageous for problems with strong nonlinearities and potentially singular behavior, such as the power-law constitutive relation in the singular regime $1<p<2$.
The proposed method is thus less broadly applicable than, e.g., the Picard--Newton methods of \cite{PollockRebholzTuXiao25}, which do not require such structure.
(2) Our convergence results are for the discretized and regularized $p$-Laplace equation only, and the constants in the analysis deteriorate as $p\to1$ at the same rate as for standard Newton. The robustness we observe numerically is thus not fully explained by the theory; we discuss it using intuitive arguments and illustrate it for a one-dimensional model problem.
(3) All linear systems in our experiments are solved with direct solvers. Although we demonstrate the method on challenging two- and three-dimensional problems with linear and quadratic finite elements and on the order of $10^5$ unknowns, we do not study the use of preconditioned iterative Krylov solvers.

\section{The $p$-Laplace equation}
\label{sec:p-laplacian}
In this section, we summarize the functional-analytic setting for the $p$-Laplace equation \eqref{eq:p-laplace-intro-toplevel} and introduce equivalent problem formulations that motivate different solution strategies.
We assume a bounded Lipschitz domain $\Omega \subset \mathbb{R}^d$ ($d\geq 1$), a source term
$f \in W^{-1,q}(\Omega)$ (with $1/p+1/q=1$), and given boundary
data $g \in W^{1/q,p}(\partial\Omega)$. Then the $p$-Laplace problem is
\begin{equation}
\label{eq:p-laplace}
-\nabla \cdot \bigl( |\nabla u|^{p-2}\, \nabla u \bigr) \;=\; f
\quad \text{in } \Omega, \qquad u = g \quad \text{on } \partial\Omega,
\end{equation}
where $p \in (1,\infty)$ controls the degree of
nonlinearity.
The natural setting is the Sobolev space
$W^{1,p}(\Omega)$: weak solutions of \eqref{eq:p-laplace} with homogeneous boundary data (i.e., $g=0$) are the unique minimizers of the strictly convex energy
\begin{equation}
J(u) \;:=\; \frac{1}{p}\int_\Omega |\nabla u|^{p}\,\mathrm{d}\bs x
       \;-\; \int_\Omega f\, u \,\mathrm{d}\bs x,
\qquad u \in W^{1,p}_0(\Omega). \label{eq:p-energy}
\end{equation}
Strict convexity of $J$ on $W^{1,p}_0(\Omega)$ ensures the existence and uniqueness of a weak solution $u^* \in W^{1,p}_0(\Omega)$.
Since nonzero Dirichlet boundary data $g$ can be treated by extending $g$ to the whole domain $\Omega$ and then solving for a corrective term to that extension, we assume $g=0$ for the remainder of this section.

Applying Newton's method directly to \eqref{eq:p-laplace} results in the linear equation \eqref{eq:newton-intro} that must be solved for the increment $\hat u$. Since this equation is only well defined if $\nabla u\not=0$ on all of $\Omega$, one often considers the regularized norm 
\begin{equation}\label{eq:eps-norm}
|\nabla u|_\varepsilon:=\sqrt{|\nabla u|^2+\varepsilon^2}
\end{equation}
in \eqref{eq:p-laplace} and \eqref{eq:p-energy}, with $0<\varepsilon\ll 1$. This results in a well-defined Newton step of the form \eqref{eq:newton-intro} with all norms $|\cdot|$ replaced by $|\cdot|_\varepsilon$. Regularizing in this (or a similar) way to avoid the singularity is essentially the only standard remedy for the ill-posedness of \eqref{eq:newton-intro}; if the resulting Newton iteration nonetheless converges poorly (as it typically does), it is unclear how to modify it further.

To isolate the nonlinearity, we thus introduce the $\mathbb R^d$-valued variable  $\bs\lambda(\bs x):= |\nabla u(\bs x)|^{p-2}\, \nabla u(\bs x) $ for all $\bs x\in\Omega$.
The resulting system in the independent variables $(u,\bs\lambda)$ is now
\begin{alignat}{2}
-\nabla\!\cdot\!\bs\lambda &= f \quad&&\text{in }\Omega,\label{eq:extrem1}\\
\bs\lambda - |\nabla u|^{p-2}\nabla u &= 0 \quad&&\text{in }\Omega,\label{eq:extrem2}
\end{alignat}
where we emphasize that \eqref{eq:extrem1} is linear, and the nonlinearity is restricted to the pointwise equation \eqref{eq:extrem2}, which can be manipulated to potentially change the nature of the nonlinearity. Before proceeding with a more detailed discussion of the solution to the lifted system \eqref{eq:extrem1}, \eqref{eq:extrem2}, we rigorously re-derive it below using Fenchel duality theory. We emphasize, however, that it is not necessary for the lifting variable $\bs\lambda$ to admit an interpretation as the solution of a dual problem.

\subsection{Dual problem and extremality conditions}\label{sec:duality}
To compute the dual optimization problem corresponding to \eqref{eq:p-energy} following \cite{EkelandTemam99}, we write $J(u) = F(\nabla u) + G(u)$ with $F(\bs\xi) = \tfrac{1}{p}\int_\Omega|\bs\xi|^p\,\mathrm{d}\bs x$ and $G(u) = -\int_\Omega f u\,\mathrm{d}\bs x$. Fenchel duality gives the dual functions
$F^\star(\bs\lambda) = \frac{1}{q}\int_\Omega |\bs\lambda|^q\,\mathrm{d}\bs x$ and $G^\star(\nabla \cdot \bs\lambda)=0$ if $-\nabla\cdot\bs\lambda - f=0$ and $G^\star(\nabla \cdot \bs\lambda)=\infty$ otherwise. Thus, the dual problem is
\begin{equation*}
\inf_{\bs \lambda}\;\frac{1}{q}\int_\Omega|\bs\lambda|^q\,\mathrm{d}\bs x \quad\text{subject to}\quad -\nabla\!\cdot\!\bs\lambda = f \;\text{ in }\Omega,
\end{equation*}
with $\bs\lambda$ in $W^{q}(\mathrm{div};\Omega)$.
The corresponding extremality conditions connecting the primal and dual solutions are now exactly \eqref{eq:extrem1}, \eqref{eq:extrem2}, derived in an ad-hoc fashion before.

This nonlinear system in $u$ and $\bs \lambda$ can be written in several equivalent forms. For instance, recognizing that \eqref{eq:extrem2} implies $|\bs \lambda| = |\nabla u|^{p-1}$, \eqref{eq:extrem2} is equivalent to
$\nabla u = |\bs\lambda|^{q-2}\bs\lambda$.
A similar, equivalent form for \eqref{eq:extrem2} is
\begin{equation}\label{eq:extrem2-mod}
|\nabla u|^{2-p}\bs\lambda - \nabla u = 0.    
\end{equation}
While these forms are mathematically equivalent, they include pointwise nonlinear transformations. As a consequence, their linearizations, and thus the corresponding Newton methods, differ. We will, in the following, use \eqref{eq:extrem2-mod}.

\subsection{Regularization and Newton methods}
\label{ssec:newton}
To avoid the singularity that occurs when $\nabla u = 0$ and $1 < p < 2$, we use the regularized norm $|\cdot|_\varepsilon$ from \eqref{eq:eps-norm} in the development of our Newton-type methods. Replacing $|\cdot|$ by $|\cdot|_\varepsilon$ in \eqref{eq:p-energy} results in the modified objective 
\begin{equation}\label{eq:J-eps}
  J_\varepsilon(u):=\frac1p\int_\Omega|\nabla u|_\varepsilon^{p}\,\mathrm{d}\bs x-\int_\Omega f\,u\,\mathrm{d}\bs x,
\end{equation}
and taking first variations yields the following functional $r\in W^{-1,q}(\Omega)$, i.e., the $p$-Laplace equation residual: 
\begin{equation}
\label{eq:p-laplace-gradient}
r:=-\nabla \cdot \bigl( |\nabla u|_\varepsilon^{p-2}\, \nabla u \bigr) - f .
\end{equation}
Integrating by parts shows that, for every $v\in W^{1,p}_0(\Omega)$,
\begin{equation}\label{eq:residual-is-derivative}
  \langle r,v\rangle
  =\int_\Omega |\nabla u|_\varepsilon^{p-2}\nabla u\cdot\nabla v\,\mathrm{d}\bs x
   -\int_\Omega f\,v\,\mathrm{d}\bs x
  = J_\varepsilon'(u)v,
\end{equation}
i.e., $r$ is the derivative of $J_\varepsilon$, regarded as an element of $W^{-1,q}(\Omega)$. In
the following we use whichever of the two viewpoints is more convenient: the energy
derivative $J_\varepsilon'$ where descent properties are at issue, and the residual $r$ when
reporting computed quantities.

Introducing an independent variable $\bs\lambda$ as above results in
the following regularized counterparts of \eqref{eq:extrem1}, \eqref{eq:extrem2}:
\begin{alignat}{2}
-\nabla\!\cdot\!\bs\lambda &= f \quad &&\text{in }\Omega,\label{eq:extrem1-reg}\\
\bs\lambda - |\nabla u|_\varepsilon^{p-2}\nabla u &= 0 &&\text{in }\Omega.\label{eq:extrem2-reg}
\end{alignat}
Here, the regularized flux \eqref{eq:extrem2-reg} is defined directly rather than through a dual problem; as noted above, such an interpretation is not required for the lifting.
Since $|\nabla u|_\varepsilon\geq\varepsilon>0$ for all $\bs x\in\Omega$, \eqref{eq:extrem2-reg} is equivalent to
\begin{equation}\label{eq:extrem2-mod-reg}
|\nabla u|_\varepsilon^{2-p}\bs\lambda - \nabla u=0.
\end{equation}
We introduce the corresponding normalized directions
\begin{equation}\label{eq:w-directions}
  \bs w_\varepsilon := \frac{\nabla u}{|\nabla u|_\varepsilon},
  \qquad\qquad
  \bs w_{\bs\lambda} := \frac{\bs\lambda}{|\nabla u|_\varepsilon^{\,p-1}},
\end{equation}
and refer to $\bs w_\varepsilon$ as the \emph{primal direction} and to $\bs w_{\bs\lambda}$ as the \emph{dual direction}. The former is the regularized counterpart of the unit gradient direction $\bs w_k$ used in \eqref{eq:newton-intro}; throughout this section, we suppress the iteration index. Note that $\bs w_{\bs\lambda}=\bs w_\varepsilon$ precisely when $u$ and $\bs\lambda$ satisfy the optimality condition \eqref{eq:extrem2-reg}.

Treating $u$ and $\bs \lambda$ as independent variables in \eqref{eq:extrem1-reg} and \eqref{eq:extrem2-mod-reg}, we can now derive the Newton step for this lifted system for the increments $(\hat u,\hat{\bs\lambda})$:
\begin{align}
  -\nabla\cdot \hat{\bs \lambda} &= f + \nabla\cdot\bs \lambda, \label{eq:newton1-lift}\\
  \bigl[(2-p)\,\bs w_{\bs\lambda}\bs w_\varepsilon^\top - I\bigr]\nabla \hat{u}
  + |\nabla u|_\varepsilon^{2-p}\hat{\bs \lambda}
  &= \nabla u - |\nabla u|_\varepsilon^{2-p}\bs \lambda.\label{eq:newton2-lift}
\end{align}
The right hand side of \eqref{eq:newton2-lift} admits a simple interpretation in terms of the normalized directions \eqref{eq:w-directions}, namely
\begin{equation*}
  \nabla u - |\nabla u|_\varepsilon^{2-p}\bs\lambda = |\nabla u|_\varepsilon\bigl(\bs w_\varepsilon - \bs w_{\bs\lambda}\bigr).
\end{equation*}
Up to the scaling by $|\nabla u|_\varepsilon$, the residual of the optimality condition is thus precisely the mismatch between the primal and the dual directions, and the lifted Newton system is driven by how far the dual direction lags behind the primal one.

The system \eqref{eq:newton1-lift}, \eqref{eq:newton2-lift} is formulated in terms of the lifted increments $(\hat u, \hat{\bs\lambda})$ and therefore has a higher dimension than the original $p$-Laplace equation. To avoid computing the Newton increment in the lifted space, we isolate the flux increment $\hat{\bs\lambda}$ in equation \eqref{eq:newton2-lift} and obtain the pointwise relation
\begin{equation}\label{eq:lambda-update}
  \hat{\bs \lambda} = |\nabla u|_\varepsilon^{p-2} \left(I + (p-2)\,\bs w_{\bs\lambda}\bs w_\varepsilon^\top\right)  \nabla \hat u - \bs \lambda + |\nabla u|_\varepsilon^{p-2} \nabla u.
\end{equation}
Using this in \eqref{eq:newton1-lift} and applying simple algebraic simplification results in a PDE in the Newton increment $\hat u$ only:
\begin{align}\label{eq:lifted-linear-sys}
        -\nabla \cdot \Bigl(|\nabla u|_\varepsilon^{p-2}\left(I + (p-2)\,\bs w_{\bs\lambda}\bs w_\varepsilon^\top\right)  \nabla \hat u \Bigr) = f + \nabla \cdot \left( |\nabla u|_\varepsilon^{p-2} \nabla u\right).
\end{align}
Note that the right hand side of this equation is the negative equation residual $r$ defined in \eqref{eq:p-laplace-gradient}.
Thus, if $u$ solves that equation, the increment $\hat u$ vanishes, as expected.

We compare this to the equation for the Newton increment for the original $p$-Laplace equation without lifting, using the same regularization:
    \begin{align}\label{eq:p-Laplace-Newton}
        -\nabla \cdot \Bigl(|\nabla u|_\varepsilon^{p-2}\left(I + (p-2)\,\bs w_\varepsilon\bs w_\varepsilon^\top\right)  \nabla \hat u \Bigr) = f + \nabla \cdot \left( |\nabla u|_\varepsilon^{p-2} \nabla u\right).
    \end{align}
Comparing \eqref{eq:lifted-linear-sys} with \eqref{eq:p-Laplace-Newton}, the two systems share the same right hand side. They only differ in one of the factors of the rank-one tensor: the lifted method uses the lagged dual direction $\bs w_{\bs\lambda}$, while standard Newton uses the primal direction $\bs w_\varepsilon$. In particular, the two linear systems coincide as soon as $\bs\lambda = |\nabla u|_\varepsilon^{p-2}\nabla u$, which holds at convergence. However, as our numerical results will show, this minor change in \eqref{eq:lifted-linear-sys} has a significant impact on the robustness of the convergence.

Note further that applying the above lift--linearize--eliminate procedure, without the transformation step, directly to \eqref{eq:extrem1-reg}, \eqref{eq:extrem2-reg} rather than to \eqref{eq:extrem1-reg}, \eqref{eq:extrem2-mod-reg} results in the Newton step \eqref{eq:p-Laplace-Newton}, i.e., the iteration one obtains by applying Newton's method directly to the (regularized) $p$-Laplace equation. The reason is that the lifted system is linear in $\bs\lambda$, and \eqref{eq:extrem2-reg} can be solved explicitly for $\bs\lambda$. Condensing the corresponding Newton system simply reproduces the chain rule, and one recovers the Newton step of the reduced equation. Introducing the lifting variable alone, therefore, has no effect on the iteration. 

What does change the iteration is using \eqref{eq:extrem2-mod-reg} instead of \eqref{eq:extrem2-reg}, which multiplies the optimality condition by the factor $|\nabla u|_\varepsilon^{2-p}$. Newton's method is invariant under affine transformations $F\mapsto AF$ of the residual with a \emph{constant} invertible $A$ \cite{Deuflhard04}. Here, however, the factor depends on the current iterate, so this {\em nonlinear} rescaling falls outside the class of affine transformations and produces the genuinely different iteration \eqref{eq:lifted-linear-sys}. It is thus not the lifting itself but the nonlinear transformation performed \emph{before} linearizing that yields the new method.

\subsection{Modifications to ensure well-posedness}\label{ssec:modifications}
Since $\bs w_\varepsilon$ defined in \eqref{eq:w-directions} satisfies $|\bs w_\varepsilon|
\le1$, it can be seen that for the standard Newton linearization \eqref{eq:p-Laplace-Newton}, the diffusion tensor $|\nabla u|_\varepsilon^{p-2}\left(I + (p-2)\,\bs w_\varepsilon\bs w_\varepsilon^\top\right)\in \mathbb R^{d\times d}$ is symmetric and positive definite for all $\bs x\in \Omega$.
As a consequence, \eqref{eq:p-Laplace-Newton}, combined with appropriate boundary conditions, is a well-defined elliptic PDE. This is not automatically the case for \eqref{eq:lifted-linear-sys}. In particular, the diffusion tensor
\begin{equation}\label{eq:lifted-tensor}
    \left(I + (p-2)\,\bs w_{\bs\lambda}\bs w_\varepsilon^\top\right)
\end{equation}
may fail to be positive definite, since $p-2<0$ and the norm of $\bs w_{\bs\lambda}$ is no longer bounded by one; in addition, it is not symmetric unless $\bs w_{\bs\lambda}$ and $\bs w_\varepsilon$ are parallel.
We address these two issues by suitably modifying the PDE operator. Note that altering the linear operator on the left-hand side of a Newton system may change the local rate of convergence. However, it does not change the underlying equation being solved, as long as its right hand side is given by the (negative) equation residual \eqref{eq:p-laplace-gradient},
as in \eqref{eq:lifted-linear-sys}.

The essential modification concerns positive definiteness. For $\bs z\in\mathbb R^d$, the Cauchy--Schwarz inequality gives
\begin{equation}\label{eq:tensor-quadratic-form}
  \bs z^\top\bigl(I + (p-2)\,\bs w_{\bs\lambda}\bs w_\varepsilon^\top\bigr)\bs z
  = |\bs z|^2 + (p-2)\,(\bs w_{\bs\lambda}\cdot\bs z)(\bs w_\varepsilon \cdot \bs z)
  \;\ge\; \bigl(1-(2-p)\,|\bs w_{\bs\lambda}|\,|\bs w_\varepsilon|\bigr)|\bs z|^2 .
\end{equation}
Since $|\bs w_\varepsilon|\le 1$, positive definiteness is assured when $(2-p)|\bs w_{\bs\lambda}|<1$. We therefore enforce, for all $\bs x\in\Omega$, the pointwise \emph{feasibility bound} $|\bs w_{\bs\lambda}(\bs x)|\le{\beta}/{(2-p)}$, or equivalently
\begin{equation}\label{eq:safeguard}
  |\bs\lambda(\bs x)|\le\frac{\beta}{2-p}\,|\nabla u(\bs x)|_\varepsilon^{p-1},
\end{equation}
for a parameter $\beta<1$, which we restrict further below. Combining \eqref{eq:safeguard} with \eqref{eq:tensor-quadratic-form} yields the two-sided bound
\begin{equation}\label{eq:tensor-sandwich}
  (1-\beta)\,|\bs z|^2
  \;\le\; \bs z^\top\bigl(I + (p-2)\,\bs w_{\bs\lambda}\bs w_\varepsilon^\top\bigr)\bs z
  \;\le\; (1+\beta)\,|\bs z|^2 ,
\end{equation}
so that the diffusion tensor of \eqref{eq:lifted-linear-sys} is uniformly positive definite whenever $\beta<1$.

The parameter $\beta$ should not be chosen arbitrarily small. At convergence, we have $\bs w_{\bs\lambda}=\bs w_\varepsilon$ and $|\bs w_\varepsilon|
\to 1$ as $\varepsilon\to0$, so a value of $\beta$ with $\beta/(2-p)<1$ would be active even at the solution of \eqref{eq:extrem1-reg}, \eqref{eq:extrem2-reg}. This would %
destroy the consistency $\bs w_{\bs\lambda}=\bs w_\varepsilon$ that is desirable for local fast convergence. We therefore require $\beta/(2-p)>1$, which, together with $\beta<1$, leaves the admissible, always nonempty window
\begin{equation}\label{eq:beta-window}
  2-p<\beta<1.
\end{equation}
In practice, the feasibility bound is imposed by projecting the lifting variable;
see \eqref{eq:discrete-lambda-projection}.

The second modification is not needed for well-posedness, but is convenient in practice. It consists of
replacing the rank-one matrix in \eqref{eq:lifted-tensor} by its symmetric part, i.e., using
\begin{equation}\label{eq:lifted-tensor-sym}
  I + (p-2)\,\sym\bigl(\bs w_{\bs\lambda}\bs w_\varepsilon^\top\bigr),
  \qquad \sym(B):=\frac12\bigl(B+B^\top\bigr).
\end{equation}
This leaves the associated quadratic form---and hence the bounds \eqref{eq:tensor-quadratic-form} and \eqref{eq:tensor-sandwich} established above---unchanged. It does, however, render the discretized operator symmetric, allowing the use of symmetric solvers and preconditioners for the resulting linear systems. Observe that this modification has no effect at a consistent pair, where $\bs w_{\bs\lambda}=\bs w_\varepsilon$ and \eqref{eq:lifted-tensor} is already symmetric.

Note that \eqref{eq:tensor-sandwich} is a statement about well-posedness, not about conditioning: since the lifted and the standard linearizations coincide at the solution, the two methods cannot differ in their asymptotic conditioning. What \eqref{eq:tensor-sandwich} does provide is a linear system that remains solvable, and an increment that remains a descent direction for $J_\varepsilon$, no matter how far the dual direction lags behind the primal one. This is what underlies the global convergence result of the next section.

\subsection{Summary of lifted Newton algorithm}
\label{ssec:lifted-algorithm}
Collecting the ingredients from the previous subsections yields the method we analyze and test below. We write
$\bs w_{\varepsilon,k}:=\nabla u_k/|\nabla u_k|_\varepsilon$ and
$\bs w_{\bs\lambda,k}:=\bs\lambda_k/|\nabla u_k|_\varepsilon^{\,p-1}$
for the directions \eqref{eq:w-directions} evaluated at the $k$-th iterate, and recall the regularized objective \eqref{eq:J-eps} with corresponding (lifted) stationarity conditions \eqref{eq:extrem1-reg}, \eqref{eq:extrem2-reg}.

We assume that $\beta$ satisfies \eqref{eq:beta-window}, and enforce the feasibility bound only at $\bs x\in\Omega$ where it is needed, namely in the construction of the diffusion tensor. We define
\begin{equation}\label{eq:discrete-lambda-projection}
  \bs\lambda_k^{\mathrm{pr}}(\bs x):=\frac{\bs\lambda_k(\bs x)}{\max\bigl(1,\,|\bs\lambda_k(\bs x)|/\rho_k(\bs x)\bigr)},
  \qquad \rho_k(\bs x):=\frac{\beta}{2-p}\,|\nabla u_k(\bs x)|_\varepsilon^{p-1},
\end{equation}
so that $|\bs\lambda_k^{\mathrm{pr}}|=\min\bigl(|\bs\lambda_k|,\rho_k\bigr)$ pointwise.
The associated direction $\bs w_{\bs\lambda,k}^{\mathrm{pr}}:=\bs\lambda_k^{\mathrm{pr}}/|\nabla u_k|_\varepsilon^{\,p-1}$ is thus the projection of $\bs w_{\bs\lambda,k}$ onto the ball of radius $\beta/(2-p)$, so that \eqref{eq:safeguard} holds at every iterate and for every $\bs\lambda_k$.
The method then proceeds as follows.
\begin{enumerate}[leftmargin=*,label=(\roman*)]
\item \emph{Initialize}: set $k=0$, initialize $u_0$ such that it satisfies the desired nonhomogeneous Dirichlet boundary conditions as in \eqref{eq:p-laplace}, initialize $\bs\lambda_0$, and choose the Armijo parameter $\gamma\in(0,1)$.
\item \emph{Assemble and solve}: use the symmetrized diffusion tensor \eqref{eq:lifted-tensor-sym} with the projected direction,
\begin{equation}\label{eq:Mk}
  M_k:=|\nabla u_k|_\varepsilon^{p-2}\Bigl(I+(p-2)\,\sym\bigl(\bs w_{\bs\lambda,k}^{\mathrm{pr}}\bs w_{\varepsilon,k}^\top\bigr)\Bigr),
\end{equation}
to assemble and solve the system for the primal increment $\hat u_k$, i.e., solve \eqref{eq:lifted-linear-sys} with diffusion tensor $M_k$ and homogeneous Dirichlet boundary conditions.
\item \emph{Recover}: compute the lifting variable increment pointwise from \eqref{eq:lambda-update},
\begin{equation}\label{eq:lambda-hat}
  \hat{\bs\lambda}_k:=M_k\nabla\hat u_k-\bs\lambda_k+|\nabla u_k|_\varepsilon^{p-2}\nabla u_k.
\end{equation}
\item \emph{Line search}: choose
$\alpha_k$ as the largest number in $\{1,1/2,1/2^2,\ldots\}$ with
\begin{equation}\label{eq:armijo}
  J_\varepsilon(u_k+\alpha_k\hat u_k)\le J_\varepsilon(u_k)+\gamma\,\alpha_k\,J_\varepsilon'(u_k)\hat u_k.
\end{equation}
\item \emph{Update and advance}: set $u_{k+1}:=u_k+\alpha_k\hat u_k$ and $\bs\lambda_{k+1}:=\bs\lambda_k+\alpha_k\hat{\bs\lambda}_k$. Set $k\leftarrow k+1$ and return to step~(ii), until a stopping criterion is met.
\end{enumerate}
Several things should be pointed out. First, eliminating $\hat{\bs\lambda}$ from the lifted system cancels $\bs\lambda_k$ from the right-hand side of the system solved in step~(ii), so the lifting variable influences the primal iterate \emph{only} through $M_k$. Projecting it there is therefore enough, and no constraint needs to be placed on $\bs\lambda_k$ itself. One may also project $\bs\lambda_k$ after each update; this variant of the algorithm is briefly discussed in Appendix \ref{app:project}.
Second, $M_k$ is symmetric and, by \eqref{eq:tensor-sandwich}, uniformly positive definite for every $k$ and every initial guess $\bs\lambda_0$. Since $\beta>2-p$, the projection \eqref{eq:discrete-lambda-projection} is inactive whenever $\bs\lambda_k$ is close to satisfying \eqref{eq:extrem2-reg}, and in particular in a neighborhood of the solution; it acts only during the initial phase of the iteration.
Third, initializing the lifting variable by $\bs\lambda_0=\bs 0$ makes the rank-one term vanish, so that $M_0=|\nabla u_0|_\varepsilon^{p-2}I$ and the first iterate is a Picard step \eqref{eq:picard-intro}, known to be a robust choice.
Note finally that only step~(ii) requires a linear solve; steps (iii) and (v) are pointwise algebraic operations on $\nabla u_k$ and $\bs\lambda_k$, so the cost of one lifted Newton iteration is essentially that of one standard Newton (or Picard) iteration.

\section{Analysis of the lifted method}\label{sec:analysis}
In this section, we analyze the lifted Newton iteration of \cref{ssec:lifted-algorithm} for the $p$-Laplace equation with $1<p<2$.
We work in the discretized setting to show global convergence with line search and fast local convergence in \cref{ssec:p-Lapl-convergence,ssec:local-convergence}. Both results also hold for the standard Newton method. The greater robustness of lifted Newton away from the solution and for $p\to 1$ is not captured by these results. In \cref{ssec:robustness}, we discuss the mechanism behind this difference, which we make precise for a one-dimensional model problem in Appendix~\ref{app:one_dim_flux}.

\subsection{Global convergence for discretized method}
\label{ssec:p-Lapl-convergence}
Proving global convergence of Newton's methods to solve nonlinear PDEs such as the $p$-Laplace equation in infinite dimensions is very challenging due to the lack of Hilbert space structure. One remedy, followed in \cite{schmidt-pStokes-global} for the $p$-Stokes system, is to use Hilbert space regularization. That is, 
in addition to the regularization $|\nabla u|_\varepsilon$, a term $\mu_0\|\nabla u\|_{L^2}^2$ with $\mu_0>0$ is added to the energy, which places the problem in $H^1$.
We take a different approach and establish global convergence for the discretized problem, that is, in a finite-dimensional setting: after fixing $V_h$, compactness of the discrete energy sublevel set provides the coercivity constant that the $H^1$ term provides in \cite{schmidt-pStokes-global}. The proof follows the same Armijo framework. Although this avoids the need for extra regularization, it comes with the drawback that the constants may depend on the discretization.

Let $V_h\subset W^{1,\infty}(\Omega)$ be a fixed conforming finite element space and let
$ \mathring{V}_h:=V_h\cap W_0^{1,2}(\Omega)$.
We equip $\mathring{V}_h$ with the $H^1$ seminorm
$\|u_h\|_h:=\|\nabla u_h\|_{L^2(\Omega)}$.
We use $\mathring{V}_h^\star$ to denote the dual space of $\mathring{V}_h$, and denote its norm by $\|\cdot\|_{h,\star}$.
We study the lifted Newton iteration of \cref{ssec:lifted-algorithm}, discretized in $\mathring{V}_h$. We consider an extension of the Dirichlet data $g$ to the domain $\Omega$ and an iteration $u_k\in g+\mathring{V}_h$ with a lifting variable $\bs\lambda_k$. Then, the increment $\hat u_k\in \mathring{V}_h$ of step~(ii) is determined by solving the finite element approximation of \eqref{eq:lifted-linear-sys} with the diffusion tensor
$M_k$ as in \eqref{eq:Mk}; the remaining steps (iii)--(v) are carried out unchanged. We denote by $r_{h,k}\in\mathring{V}_h^\star$ the discrete residual at the iterate $u_{h,k}$, i.e., the restriction to $\mathring{V}_h$ of the functional \eqref{eq:p-laplace-gradient}. By \eqref{eq:residual-is-derivative} it satisfies $\langle r_{h,k},v_h\rangle=J_\varepsilon'(u_{h,k})v_h$ for all $v_h\in\mathring{V}_h$. Since $\|v_h\|_h=\|\nabla v_h\|_{L^2(\Omega)}$, the dual norm $\|\cdot\|_{h,\star}$ used below is the $H^{-1}$ norm reported in the numerical experiments of \cref{sec:numerics-p-laplace}.
A global convergence result in finite dimensions, similar as presented below, also applies to the standard Newton method. Thus, the main point of the following theorem is that a global convergence analysis also holds for the lifted Newton method from \cref{ssec:lifted-algorithm}.

\begin{theorem}[Global convergence of the lifted method]
\label{thm:finite-dimensional-lifted-global-convergence}
Let $\beta$ satisfy \eqref{eq:beta-window} and assume arbitrary initializations $u_{h,0}\in g+\mathring{V}_{h}$
and 
$\bs\lambda_{h,0}$. 
Then the (discrete) iterates produced by the iteration of \cref{ssec:lifted-algorithm} converge globally, in the sense that the discrete residual
$r_{h,k}\to 0$ in $\mathring{V}_{h}^\star$,
and
\begin{equation*}
  u_{h,k}\to u_h^* \quad\text{in }V_h,
\end{equation*}
where $u_h^*\in g+\mathring{V}_{h}$ is the unique minimizer of $J_\varepsilon$ over $g+\mathring{V}_{h}$.
\end{theorem}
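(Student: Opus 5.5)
We follow the standard Armijo descent framework for gradient-related directions in finite dimensions. \emph{First}, we establish that $J_\varepsilon$ restricted to $g+\mathring{V}_h$ is strictly convex, $C^2$ and coercive. Coercivity follows because $|\nabla u|_\varepsilon^p\ge|\nabla u|^p$, $p>1$, and all norms on the finite-dimensional $\mathring{V}_h$ are equivalent. Strict convexity follows because the Hessian tensor $|\nabla u|_\varepsilon^{p-2}(I+(p-2)\bs w_\varepsilon\bs w_\varepsilon^\top)$ is pointwise positive definite, as $|\bs w_\varepsilon|<1$ and $p>1$. Hence the sublevel set $\mathcal L_0:=\{u\in g+\mathring{V}_h: J_\varepsilon(u)\le J_\varepsilon(u_{h,0})\}$ is compact, and the minimizer $u_h^*$ exists and is unique. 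Since the Armijo rule makes $J_\varepsilon(u_{h,k})$ nonincreasing, all iterates lie in $\mathcal L_0$. On this compact set, $\|\nabla u_{h,k}\|_{L^\infty}\le C_0$ by norm equivalence ($V_h\subset W^{1,\infty}$). Consequently, $|\nabla u_{h,k}|_\varepsilon\in[\varepsilon,\sqrt{C_0^2+\varepsilon^2}]$ pointwise.

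\emph{Second}, we obtain uniform spectral bounds on $M_k$ that hold independently of $\bs\lambda_{h,k}$. This independence is the key point, since $\bs\lambda_{h,k}$ is uncontrolled. By the projection \eqref{eq:discrete-lambda-projection}, the bound \eqref{eq:safeguard} holds for $\bs w^{\mathrm{pr}}_{\bs\lambda,k}$. Then \eqref{eq:tensor-sandwich}, which is unchanged by symmetrization, combined with the bounds on $|\nabla u_{h,k}|_\varepsilon$ and $p-2<0$, gives
\begin{equation*}
 c_1|\bs z|^2\le \bs z^\top M_k\bs z\le c_2|\bs z|^2,\qquad c_1=(1-\beta)(C_0^2+\varepsilon^2)^{(p-2)/2},\quad c_2=(1+\beta)\varepsilon^{p-2},
\end{equation*}
for all $k$ and $\bs z$. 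Let $A_k:\mathring{V}_h\to\mathring{V}_h^\star$ be the operator with $\langle A_kv,w\rangle=\int_\Omega M_k\nabla v\cdot\nabla w$. Step (ii) then reads $A_k\hat u_k=-r_{h,k}$. We obtain
\begin{equation*}
J_\varepsilon'(u_{h,k})\hat u_k=-\langle A_k\hat u_k,\hat u_k\rangle\le -c_1\|\hat u_k\|_h^2,
\end{equation*}
and also $\|r_{h,k}\|_{h,\star}\le c_2\|\hat u_k\|_h$ and $\|\hat u_k\|_h\le c_1^{-1}\|r_{h,k}\|_{h,\star}$. So $\hat u_k$ is a uniformly gradient-related descent direction.

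\emph{Third}, we handle the line search, which is the main technical step. We need a uniform lower bound on $\alpha_k$, and for that we need a Lipschitz bound for $J_\varepsilon'$ on a neighborhood of $\mathcal L_0$. Because $\|\hat u_k\|_h\le c_1^{-1}\|r_{h,k}\|_{h,\star}$ is bounded on $\mathcal L_0$, all trial points $u_{h,k}+\alpha\hat u_k$ with $\alpha\le1$ stay in a fixed compact set $K$. On $K$, the Hessian of $J_\varepsilon$ is bounded by some $L$, since it is pointwise bounded by $\varepsilon^{p-2}$. Taylor expansion then gives
\begin{equation*}
J_\varepsilon(u+\alpha\hat u)\le J_\varepsilon(u)+\alpha J_\varepsilon'(u)\hat u+\tfrac{L}{2}\alpha^2\|\hat u\|_h^2.
\end{equation*}
Hence \eqref{eq:armijo} holds whenever $\alpha\le 2(1-\gamma)c_1/L$, so $\alpha_k\ge\bar\alpha:=\min\{1,(1-\gamma)c_1/L\}>0$. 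Summing the Armijo decreases, using that $J_\varepsilon$ is bounded below, yields
\begin{equation*}
\sum_k \gamma\bar\alpha c_1\|\hat u_k\|_h^2<\infty,
\end{equation*}
so $\hat u_k\to0$. By the second step, $r_{h,k}\to0$ in $\mathring{V}_h^\star$.

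\emph{Finally}, we conclude convergence of the iterates. The sequence $(u_{h,k})$ lies in the compact set $\mathcal L_0$, and every accumulation point $\bar u$ satisfies $J_\varepsilon'(\bar u)=0$ on $\mathring{V}_h$ by continuity of $J_\varepsilon'$. By strict convexity, $\bar u=u_h^*$. Since the accumulation point is unique and the sequence lies in a compact set, the whole sequence converges to $u_h^*$.
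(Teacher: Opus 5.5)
Your proof is correct and follows the paper's overall structure. Compactness of the sublevel set gives the $L^\infty$ gradient bound, and your constants $c_1,c_2$ are exactly the paper's $\underline m_h,\overline m_h$. As in the paper, you use the descent identity $J_\varepsilon'(u_{h,k})\hat u_k=-\langle A_k\hat u_k,\hat u_k\rangle$, sum the Armijo decreases, and identify the limit by compactness and strict convexity.

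The one genuinely different ingredient is the estimate that bounds $\alpha_k$ from below. You use the generic descent lemma with curvature constant $L=\varepsilon^{p-2}$ and compare it with $c_1$, which gives
\[
\alpha_k\ge(1-\gamma)(1-\beta)\bigl(\varepsilon^2/(C_0^2+\varepsilon^2)\bigr)^{(2-p)/2}.
\]
Since $\|S'(\bs\xi)\|\le\varepsilon^{p-2}$ for all $\bs\xi$, this curvature bound holds globally, so your detour through the compact set $K$ is unnecessary.

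The paper instead exploits $p<2$ structurally. Concavity of $t\mapsto t^{p/2}$ yields the remainder-free majorant
\[
J_\varepsilon(u_h+\alpha v_h)\le J_\varepsilon(u_h)+\alpha J_\varepsilon'(u_h)v_h+\frac{\alpha^2}{2}\int_\Omega|\nabla u_h|_\varepsilon^{p-2}|\nabla v_h|^2\,\mathrm{d}\bs x,
\]
so the Picard form majorizes the curvature. Because this form carries the same weight $|\nabla u_{h,k}|_\varepsilon^{p-2}$ as $M_k$, the tensor sandwich bounds it by $a_k(\hat u_{h,k},\hat u_{h,k})/(1-\beta)$. This gives $\alpha_k\ge(1-\gamma)(1-\beta)$, independent of $h$, $\varepsilon$, $C_h$ and the initial guess. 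As a consequence, the paper needs the $C_h$-dependent lower bound $\underline m_h$ neither for the step size nor for $r_{h,k}\to0$; the latter follows from $a_k(\hat u_{h,k},\hat u_{h,k})\to0$ and $\overline m_h$ alone.

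Your route buys generality: it needs only a global Lipschitz bound on $J_\varepsilon'$ and uniform coercivity of $a_k$, so it applies to any uniformly gradient-related direction. The paper's route buys a sharp step-size bound that depends only on $\beta$ and $\gamma$. For the qualitative theorem, and for the subsequent corollary, which only needs some $\underline\alpha>0$, either bound suffices.
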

\begin{proof}
Consider the sublevel set
$\mathcal L_h:=\{u_h\in g+\mathring{V}_h:J_\varepsilon(u_h)\le J_\varepsilon(u_0)\}$, which is bounded because
$J_\varepsilon(u_h)\to \infty$ as $\|u_h\|_h\to \infty$. %
Since $V_h$ is finite-dimensional, $\mathcal{L}_h$ is compact, and therefore
$ C_h:=\max_{u_h\in\mathcal L_h}\|\nabla u_h\|_{L^\infty(\Omega)}<\infty$.
We prove the uniform boundedness and coercivity of the linearization resulting from the lifting approach on the sublevel set. To this end, suppose that $u_{h,k}\in\mathcal L_h$.
We first derive pointwise spectral bounds for $M_k$ defined in \eqref{eq:Mk}. For any $\bs z\in\mathbb R^d$,
\eqref{eq:tensor-sandwich} gives
\begin{equation}\label{eq:spectral-lower-upper}
  \begin{aligned}
    \bs z^\top M_k \bs z
    &\geq (1-\beta)|\nabla u_{h,k}|_\varepsilon^{p-2}|\bs z|^2
    \geq \underline m_h
    |\bs z|^2,\\
  \bs z^\top M_k \bs z&\leq (1+\beta)|\nabla u_{h,k}|_\varepsilon^{p-2}|\bs z|^2\leq
  \overline m_h |\bs z|^2,
  \end{aligned}
\end{equation}
where $\underline m_h:=(1-\beta)(C_h^2+\varepsilon^2)^{(p-2)/2}$ and  $\overline m_h:=(1+\beta)\varepsilon^{p-2}$.
Denoting
\begin{equation*}
  a_k(v_h,w_h):=\int_\Omega M_k\nabla v_h\cdot\nabla w_h\,\mathrm{d}\bs x,
\end{equation*}
we thus have
\begin{equation}\label{eq:Mk-coercive-bounded}
  a_k(v_h,v_h)\geq \underline m_h\|v_h\|_h^2, \quad |a_k(v_h,w_h)| \leq \overline m_h\|v_h\|_h\|w_h\|_h \quad\text{ for all } v_h,w_h\in\mathring{V}_h.
\end{equation}
The pointwise bounds \eqref{eq:Mk-coercive-bounded} imply that $a_k$ defines an inner product on $\mathring V_h$, so the primal increment is uniquely defined by $a_k(\hat u_{h,k},v_h)=-J_\varepsilon'(u_{h,k})v_h$ for all $v_h\in\mathring V_h$. Choosing $v_h=\hat u_{h,k}$ gives
\begin{equation}\label{eq:direct-descent-identity}
  J_\varepsilon'(u_{h,k})\hat u_{h,k}
  =-a_k(\hat u_{h,k},\hat u_{h,k}).
\end{equation}
If $\hat u_{h,k}=0$, then $J_\varepsilon'(u_{h,k})=0$ on
$\mathring V_h$, and strict convexity shows that $u_{h,k}=u_h^*$.  We may
therefore assume below that $\hat u_{h,k}\neq0$.

Next, we bound the energy along the search direction. Since $p<2$, the map $t\mapsto t^{p/2}$ is concave and thus lies below its tangent line at any $t_0>0$.
Applying this pointwise with $t_0=|\nabla u_h|_\varepsilon^2$ and $t=|\nabla u_h+\alpha\nabla v_h|_\varepsilon^2$, whose difference is $2\alpha\nabla u_h\!\cdot\!\nabla v_h+\alpha^2|\nabla v_h|^2$, then dividing by $p$, integrating, and adding the linear load term to both sides gives
\begin{equation}\label{eq:energy-majorization-discrete}
  J_\varepsilon(u_h+\alpha v_h)
  \le J_\varepsilon(u_h)+\alpha J_\varepsilon'(u_h)v_h
  +\frac{\alpha^2}{2}\int_\Omega
  |\nabla u_h|_\varepsilon^{p-2}|\nabla v_h|^2\,\mathrm{d}\bs x
\end{equation}
for all $u_h\in g+\mathring V_h$, $v_h\in\mathring V_h$, and $\alpha\ge0$. Note that the last term in \eqref{eq:energy-majorization-discrete} is the bilinear form of the Picard iteration \eqref{eq:picard-intro}, which majorizes the curvature of $J_\varepsilon$ because $p<2$. The estimate \eqref{eq:energy-majorization-discrete} holds globally in $\alpha$ and involves no remainder term.

We now take $u_h=u_{h,k}$ and $v_h=\hat u_{h,k}$. The lower bound in \eqref{eq:tensor-sandwich} gives
\begin{equation*}
  \int_\Omega|\nabla u_{h,k}|_\varepsilon^{p-2}|\nabla\hat u_{h,k}|^2\,\mathrm{d}\bs x
  \le\frac{1}{1-\beta}\,a_k(\hat u_{h,k},\hat u_{h,k}),
\end{equation*}
so that \eqref{eq:energy-majorization-discrete} and \eqref{eq:direct-descent-identity} yield, for every $\alpha\ge0$,
\begin{equation}\label{eq:armijo-majorant}
  J_\varepsilon(u_{h,k}+\alpha\hat u_{h,k})
  \le J_\varepsilon(u_{h,k})
  -\alpha\Bigl(1-\frac{\alpha}{2(1-\beta)}\Bigr)
   a_k(\hat u_{h,k},\hat u_{h,k}).
\end{equation}
By \eqref{eq:direct-descent-identity}, the Armijo condition \eqref{eq:armijo} requires a decrease of at least $\gamma\alpha\,a_k(\hat u_{h,k},\hat u_{h,k})$. Comparing with \eqref{eq:armijo-majorant}, it is therefore satisfied as soon as the bracket is at least $\gamma$, that is,
whenever $0<\alpha\le2(1-\gamma)(1-\beta)$. Since the line search halves the trial step
starting from $\alpha=1$, it terminates with an accepted step size
$\alpha_k\ge\underline\alpha:=(1-\gamma)(1-\beta)>0$.
Since \eqref{eq:armijo} and \eqref{eq:direct-descent-identity} give $J_\varepsilon(u_{h,k+1})\le J_\varepsilon(u_{h,k})$, starting from
$u_{h,0}\in\mathcal L_h$, induction therefore proves that the iteration is
well defined and that every $u_{h,k}$ remains in $\mathcal L_h$.
Using \eqref{eq:direct-descent-identity}, \eqref{eq:armijo}, and the lower bound on $\alpha$, we obtain
\begin{equation*}
  J_\varepsilon(u_{h,k})-J_\varepsilon(u_{h,k+1})
  \ge \gamma\underline\alpha\,
  a_k(\hat u_{h,k},\hat u_{h,k}).
\end{equation*}
Since $J_\varepsilon$ is bounded below, summation over $k$ shows that $a_k(\hat u_{h,k},\hat u_{h,k})\to 0$. Cauchy--Schwarz in the  $a_k$-inner product and the upper bound in
\eqref{eq:Mk-coercive-bounded} give, for every $v_h\in\mathring V_h$,
\begin{equation*}
  |J_\varepsilon'(u_{h,k})v_h|^2
  =|a_k(\hat u_{h,k},v_h)|^2
  \le \overline m_h\, a_k(\hat u_{h,k},\hat u_{h,k})\|v_h\|_h^2,
\end{equation*}
so that $r_{h,k}\to0$ in $\mathring V_h^\star$ by \eqref{eq:residual-is-derivative}. Compactness of $\mathcal L_h$ and strict convexity of $J_\varepsilon$ then prove $u_{h,k}\to u_h^*$ in $V_h$.
\end{proof}

Next, we prove the convergence of the discretized lifting variable $\bs \lambda_{h,k}$. The discretized lifting variable is defined at the quadrature points by $\bs x_q$, $q=1,\ldots,N_Q$ and it is thus an element of $(\mathbb R^d)^{N_Q}$. We denote 
$\bs\lambda_{h,k;q}:=\bs\lambda_{h,k}(\bs x_q)$ and more details on the practical implementation are provided in \cref{ssec:plaplace-setup}. Convergence in a finite-dimensional space is equivalent to componentwise convergence, which is what we show below. For $\bs\xi\in\mathbb R^d$, we define
$  S(\bs\xi):=|\bs\xi|_\varepsilon^{p-2}\bs\xi$.

\begin{corollary}[Pointwise convergence of the lifting variable]
\label{cor:quadrature-flux-convergence}
Under the assumptions of \cref{thm:finite-dimensional-lifted-global-convergence}, for every $q=1,\ldots,N_Q$,
\begin{equation*}
  \bs\lambda_{h,k;q}
  \longrightarrow
  S\left(\nabla u_h^*(\bs x_q)\right)
  \quad\text{ in }\mathbb R^d.
\end{equation*}
\end{corollary}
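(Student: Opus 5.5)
The plan is to turn the update of step~(v) into a linear recursion for the pointwise error of the lifting variable. That recursion has a contraction factor bounded away from one and a forcing term that vanishes by \cref{thm:finite-dimensional-lifted-global-convergence}.

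\emph{Step 1: the recursion.} Fix a quadrature point $\bs x_q$. By \eqref{eq:lambda-hat} and step~(v),
\begin{equation*}
  \bs\lambda_{h,k+1;q}=(1-\alpha_k)\,\bs\lambda_{h,k;q}+\alpha_k\Bigl(S\bigl(\nabla u_{h,k}(\bs x_q)\bigr)+M_k(\bs x_q)\nabla\hat u_{h,k}(\bs x_q)\Bigr).
\end{equation*}
Define $\bs e_k:=\bs\lambda_{h,k;q}-S(\nabla u_h^*(\bs x_q))$ and
\begin{equation*}
  \bs\delta_k:=S\bigl(\nabla u_{h,k}(\bs x_q)\bigr)-S\bigl(\nabla u_h^*(\bs x_q)\bigr)+M_k(\bs x_q)\nabla\hat u_{h,k}(\bs x_q).
\end{equation*}
Subtracting $S(\nabla u_h^*(\bs x_q))$ from both sides gives $\bs e_{k+1}=(1-\alpha_k)\bs e_k+\alpha_k\bs\delta_k$. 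The proof of the theorem shows $\alpha_k\in[\underline\alpha,1]$ with $\underline\alpha=(1-\gamma)(1-\beta)>0$. Since $1-\alpha_k\ge0$, we obtain $|\bs e_{k+1}|\le(1-\underline\alpha)|\bs e_k|+|\bs\delta_k|$.

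\emph{Step 2: $\bs\delta_k\to0$.} This is the step needing the most care. The main tool is that on the finite-dimensional space $V_h\subset W^{1,\infty}(\Omega)$, point evaluation of gradients at the fixed points $\bs x_q$ is a linear functional. Hence there is a constant $C_{h,q}$ with $|\nabla v_h(\bs x_q)|\le C_{h,q}\|v_h\|_h$ for all $v_h\in\mathring V_h$, by equivalence of norms; this uses that $\bs x_q$ lies in the interior of an element, so that $\nabla v_h(\bs x_q)$ is well defined.

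For the first part of $\bs\delta_k$: by the theorem, $u_{h,k}\to u_h^*$ in $V_h$, so $\nabla u_{h,k}(\bs x_q)\to\nabla u_h^*(\bs x_q)$. Since $S$ is continuous (indeed smooth, because $\varepsilon>0$), the difference of the two $S$ terms tends to zero.

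For the second part: the proof of the theorem showed $a_k(\hat u_{h,k},\hat u_{h,k})\to0$. Together with the coercivity in \eqref{eq:Mk-coercive-bounded}, this gives $\|\hat u_{h,k}\|_h\to0$, and hence $\nabla\hat u_{h,k}(\bs x_q)\to0$. The iterates stay in $\mathcal L_h$, so \eqref{eq:spectral-lower-upper} applies at every $k$ and gives $\|M_k(\bs x_q)\|\le\overline m_h$ uniformly in $k$. Therefore $M_k(\bs x_q)\nabla\hat u_{h,k}(\bs x_q)\to0$. This argument does not depend on the projection or on $\bs\lambda_k$, because $M_k$ is bounded for any lifting variable.

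\emph{Step 3: conclusion.} Iterating the inequality from Step 1 gives
\begin{equation*}
  |\bs e_k|\le(1-\underline\alpha)^k|\bs e_0|+\sum_{j<k}(1-\underline\alpha)^{k-1-j}|\bs\delta_j|.
\end{equation*}
The first term tends to zero. For the sum, fix $\eta>0$ and choose $K$ with $|\bs\delta_j|\le\eta$ for all $j\ge K$. The terms with $j<K$ decay geometrically in $k$. The terms with $j\ge K$ sum to at most $\eta/\underline\alpha$. Hence $\limsup_k|\bs e_k|\le\eta/\underline\alpha$ for every $\eta>0$, so $\bs e_k\to0$. This holds for each $q=1,\ldots,N_Q$, which proves the corollary.
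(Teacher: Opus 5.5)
Your proof is correct and follows essentially the paper's argument. Both rest on the same contraction recursion with factor $1-\underline\alpha$ and a forcing term that vanishes. The differences are cosmetic: you measure the error against the limit $S(\nabla u_h^*(\bs x_q))$ rather than the current $S(\nabla u_{h,k}(\bs x_q))$, you obtain $\nabla\hat u_{h,k}(\bs x_q)\to0$ from $a_k(\hat u_{h,k},\hat u_{h,k})\to0$ and coercivity instead of from $u_{h,k}\to u_h^*$, and you spell out the elementary limit argument that the paper leaves implicit.
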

\begin{proof}
For a fixed $q$, let $S_k:=S(\nabla u_{h,k}(\bs x_q))$, and $S^*:=S\left(\nabla u_h^*(\bs x_q)\right)$. Since cell-local gradient
evaluation is continuous on the fixed finite-dimensional space $V_h$, and
$S$ is continuous, the convergence $u_{h,k}\to u_h^*$ gives $\nabla u_{h,k}(\bs x_q)\to \nabla u_h^*(\bs x_q)$, and thus
\begin{equation*}
  S_k\to S^*,
  \qquad
  \nabla(u_{h,k+1}-u_{h,k})(\bs x_q)\to0.
\end{equation*}
At the same quadrature point, \eqref{eq:lambda-hat} and step~(v) of
\cref{ssec:lifted-algorithm} give
\begin{align*}
  \bs\lambda_{h,k+1;q}-S_{k+1}
  =(1-\alpha_k)(\bs\lambda_{h,k;q}-S_k)+(S_k-S_{k+1})+M_k(\bs x_q)\nabla(u_{h,k+1}-u_{h,k})(\bs x_q).
\end{align*}
By the upper bound in \eqref{eq:spectral-lower-upper}, for every $\bs z\in\mathbb R^d$, we have $|M_k(\bs x_q)\bs z|\le \overline m_h|\bs z|$. Moreover, the proof of \cref{thm:finite-dimensional-lifted-global-convergence} showed that
$\alpha_k\ge\underline\alpha>0$.
Thus, with
\begin{equation*}
    e_k:=|\bs\lambda_{h,k;q}-S_k|, \qquad b_k:=|S_{k+1}-S_k|+\overline m_h\left|\nabla(u_{h,k+1}-u_{h,k})(\bs x_q)\right|
\end{equation*}
we have
\begin{equation}\label{eq:Ek-recursion}
    e_{k+1} \le (1-\underline\alpha)e_k+b_k.
\end{equation}
Here $b_k\to0$ by the convergence statements above. Since in addition $1-\underline\alpha<1$, this gives $e_k\to0$, and the pointwise convergence now follows from $S_k\to S^*$.
\end{proof}
Note that the coercivity constant $\underline m_h$ depends on the bound $C_h$, which comes from the compactness of the fixed finite element sublevel set. In the continuous space, an energy sublevel set gives an $L^p$-bound but not an $L^\infty$-bound on $\nabla u$. Therefore, the same argument does not yield a mesh-independent coercivity constant for $M_k$ on $H_0^1(\Omega)$ unless one adds an additional uniformly elliptic term as in \cite{schmidt-pStokes-global} or assumes a priori an $L^\infty$ bound on the gradients.

\subsection{Local convergence for discretized problem}
\label{ssec:local-convergence}
Recall that the lifted method was derived as a Newton method in lifted space, and that  
the resulting Newton method coincides with the standard Newton method at the solution, where $\bs w_{\bs\lambda}=\bs w_\varepsilon$. For these reasons, one can expect the lifted method to inherit the fast local convergence of the standard Newton method. However,
it must be shown 
that the linear operator arising in the lifted Newton method approaches the exact one fast enough. Measuring the
primal error together with the inconsistency of the pair
$(u_{h,k},\bs\lambda_{h,k})$ in \eqref{eq:extrem2-reg},
\begin{equation}\label{eq:local-errors}
  \mathcal E_k:=\|u_{h,k}-u_h^*\|_h
  +\bigl\|\bs\lambda_{h,k}-S(\nabla u_{h,k})\bigr\|_{\infty},
\end{equation}
we show below that the rate is quadratic. We will refer to standard arguments from the literature and use two ingredients specific to the lifted method. The standard ingredients are the local convergence of Newton's method with a perturbed Jacobian \cite{Kelley95} and the acceptance of unit steps by the Armijo line search \cite{NocedalWright06}. The specific ingredients are a bound on the perturbation of the Jacobian in terms of the inconsistency $\delta_k$ of the pair $(u_{h,k},\bs\lambda_{h,k})$, and the observation that the lifting variable update reduces this inconsistency quadratically.

Below, all pointwise quantities ($\bs\lambda_{h,k}$, $M_k$, $\nabla u_{h,k}$) are evaluated at the quadrature points $\bs x_q$, and $\|\cdot\|_\infty$ is the maximum over these points; we write $\delta_k$ for the second term in \eqref{eq:local-errors}. Since $V_h$ is finite-dimensional, there is a constant $C_{\rm inv}$ with $\|\nabla v_h\|_\infty\le C_{\rm inv}\|v_h\|_h$ for all $v_h\in V_h$. Since $\varepsilon>0$, the map $S$ of corollary \ref{cor:quadrature-flux-convergence} is smooth, with
\begin{equation}\label{eq:S-prime}
  S'(\bs\xi)=|\bs\xi|_\varepsilon^{p-2}\bigl(I+(p-2)\,\bs w\bs w^\top\bigr),\qquad \bs w:=\bs\xi/|\bs\xi|_\varepsilon,
\end{equation}
which is the diffusion tensor of standard Newton \eqref{eq:p-Laplace-Newton}, and $\|S'(\bs\xi)\|\le\varepsilon^{p-2}$ and $\|S''(\bs\xi)\|\le C_p\,\varepsilon^{p-3}$ for all $\bs\xi\in\mathbb R^d$. We write $H_k:=S'(\nabla u_{h,k})$ for the standard Newton tensor at the $k$-th iterate. All constants below depend only on $h$, $\varepsilon$, $p$, $\beta$ and $\gamma$.

\begin{lemma}[Jacobian error]\label{lem:tangent-error}
For every $k$,
\begin{equation}\label{eq:tangent-error}
  \|M_k-H_k\|_\infty\le\frac{2-p}{\varepsilon}\,\delta_k .
\end{equation}
\end{lemma}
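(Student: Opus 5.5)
The plan is to compute $M_k-H_k$ explicitly at each quadrature point and bound it using the nonexpansiveness of the projection \eqref{eq:discrete-lambda-projection}. At a fixed quadrature point, write $g:=|\nabla u_{h,k}|_\varepsilon\ge\varepsilon$. Note that $\bs w_{\varepsilon,k}\bs w_{\varepsilon,k}^\top$ is already symmetric. Then \eqref{eq:Mk} and \eqref{eq:S-prime} give
\begin{equation*}
  M_k-H_k=(p-2)\,g^{p-2}\,\sym\bigl((\bs w^{\mathrm{pr}}_{\bs\lambda,k}-\bs w_{\varepsilon,k})\bs w_{\varepsilon,k}^\top\bigr).
\end{equation*}
For vectors $\bs a,\bs b$ we have $\|\sym(\bs a\bs b^\top)\|\le|\bs a|\,|\bs b|$. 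Combined with $|\bs w_{\varepsilon,k}|\le1$, this yields the pointwise bound $\|M_k-H_k\|\le(2-p)\,g^{p-2}\,|\bs w^{\mathrm{pr}}_{\bs\lambda,k}-\bs w_{\varepsilon,k}|$.

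The key step is to replace the projected dual direction by the unprojected one. The map $\bs w_{\bs\lambda,k}\mapsto\bs w^{\mathrm{pr}}_{\bs\lambda,k}$ is the radial scaling $\bs v\mapsto\bs v/\max(1,|\bs v|/R)$ with $R:=\beta/(2-p)$. This is exactly the Euclidean metric projection onto the closed ball of radius $R$, so it is $1$-Lipschitz. By \eqref{eq:beta-window}, $R>1\ge|\bs w_{\varepsilon,k}|$, so $\bs w_{\varepsilon,k}$ lies in the ball and is a fixed point of the projection. Nonexpansiveness then gives
\begin{equation*}
  |\bs w^{\mathrm{pr}}_{\bs\lambda,k}-\bs w_{\varepsilon,k}|\le|\bs w_{\bs\lambda,k}-\bs w_{\varepsilon,k}|.
\end{equation*}
This is the only step with any content, and it is where the lower limit $\beta>2-p$ of the window is used.

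Next, we express the right-hand side through the inconsistency. By \eqref{eq:w-directions}, $\bs w_{\varepsilon,k}=\nabla u_{h,k}/g=S(\nabla u_{h,k})/g^{p-1}$ and $\bs w_{\bs\lambda,k}=\bs\lambda_{h,k}/g^{p-1}$. Hence $|\bs w_{\bs\lambda,k}-\bs w_{\varepsilon,k}|=g^{1-p}\,|\bs\lambda_{h,k}-S(\nabla u_{h,k})|\le g^{1-p}\delta_k$. Combining the estimates gives
\begin{equation*}
  \|M_k-H_k\|\le(2-p)\,g^{-1}\delta_k\le\frac{2-p}{\varepsilon}\,\delta_k,
\end{equation*}
using $g\ge\varepsilon$. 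Taking the maximum over the quadrature points yields \eqref{eq:tangent-error}.
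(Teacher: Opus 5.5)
Your proof is correct and matches the paper's argument step for step. You use the same explicit formula for $M_k-H_k$, the same fact that the projection onto the ball of radius $\beta/(2-p)>1$ is $1$-Lipschitz and fixes $\bs w_{\varepsilon,k}$, and the same closing estimates $\|\sym(\bs a\bs b^\top)\|\le|\bs a||\bs b|$ and $|\nabla u_{h,k}|_\varepsilon\ge\varepsilon$; your identity $\bs w_{\varepsilon,k}=S(\nabla u_{h,k})/|\nabla u_{h,k}|_\varepsilon^{p-1}$ only spells out a step the paper leaves implicit.
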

\begin{proof}
Comparing \eqref{eq:Mk} and \eqref{eq:S-prime},
$M_k-H_k=(p-2)|\nabla u_{h,k}|_\varepsilon^{p-2}\sym\bigl((\bs w_{\bs\lambda,k}^{\mathrm{pr}}-\bs w_{\varepsilon,k})\bs w_{\varepsilon,k}^\top\bigr)$.
Since $|\bs w_{\varepsilon,k}|\le1<\beta/(2-p)$ by \eqref{eq:beta-window}, the projection \eqref{eq:discrete-lambda-projection} leaves $\bs w_{\varepsilon,k}$ fixed and is $1$-Lipschitz, so that
$|\bs w_{\bs\lambda,k}^{\mathrm{pr}}-\bs w_{\varepsilon,k}|\le|\bs w_{\bs\lambda,k}-\bs w_{\varepsilon,k}|=|\bs\lambda_{h,k}-S(\nabla u_{h,k})|/|\nabla u_{h,k}|_\varepsilon^{p-1}$. Using $\|\sym(\bs a\bs b^\top)\|\le|\bs a||\bs b|$ and $|\nabla u_{h,k}|_\varepsilon\ge\varepsilon$ gives \eqref{eq:tangent-error}.
\end{proof}

The second ingredient concerns the lifting variable. Suppose that a unit step is taken, $\alpha_k=1$. Then \eqref{eq:lambda-hat} and step~(v) of \cref{ssec:lifted-algorithm} give
$\bs\lambda_{h,k+1}=\bs\lambda_{h,k}+\hat{\bs\lambda}_{h,k}=S(\nabla u_{h,k})+M_k\nabla\hat u_{h,k}$, whereas Taylor expansion of $S$ gives
$S(\nabla u_{h,k+1})=S(\nabla u_{h,k})+H_k\nabla\hat u_{h,k}+\bs R_k$ with $\|\bs R_k\|_\infty\le C_p\,\varepsilon^{p-3}\,\|\nabla\hat u_{h,k}\|_\infty^2$. Subtracting the two identities, the first-order terms cancel, and we obtain
\begin{equation}\label{eq:consistency-identity}
  \bs\lambda_{h,k+1}-S(\nabla u_{h,k+1})
  =(M_k-H_k)\nabla\hat u_{h,k}+\bs R_k .
\end{equation}
This cancellation makes the method second order: the lifting variable update \eqref{eq:lambda-hat} adds to $\bs\lambda_{h,k}$ exactly the first-order change of $S(\nabla u_{h,k})$, up to the Jacobian error. Together with \eqref{eq:tangent-error} and the inverse estimate, \eqref{eq:consistency-identity} yields
\begin{equation}\label{eq:delta-recursion}
  \delta_{k+1}\le C\bigl(\delta_k+\|\hat u_{h,k}\|_h\bigr)\|\hat u_{h,k}\|_h .
\end{equation}

\begin{theorem}[Local quadratic convergence]\label{thm:local-quadratic}
Let the assumptions of
\cref{thm:finite-dimensional-lifted-global-convergence} hold and let the
Armijo parameter satisfy $\gamma<\tfrac12$. Then there exist $\rho>0$ and
$C_\star>0$, independent of $k$, such that $\mathcal E_k\le\rho$ implies
$\alpha_k=1$ and
\begin{equation}\label{eq:quadratic}
  \mathcal E_{k+1}\le C_\star\,\mathcal E_k^2 .
\end{equation}
In particular, since $\mathcal E_k\to0$ by \cref{thm:finite-dimensional-lifted-global-convergence} and corollary \ref{cor:quadrature-flux-convergence}, the iteration eventually takes unit steps and converges quadratically.
\end{theorem}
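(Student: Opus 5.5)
The plan is to treat the lifted step as a Newton step with perturbed Jacobian, and to control the perturbation with Lemma~\ref{lem:tangent-error} and the consistency recursion \eqref{eq:delta-recursion}. Let $A_k$ denote the discrete operator $v_h\mapsto a_k(v_h,\cdot)$ on $\mathring V_h$. Let $\mathcal H_k$ be the analogous operator with tensor $H_k$, i.e.\ the exact Hessian $J_\varepsilon''(u_{h,k})$ restricted to $\mathring V_h$.

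First, I will establish the primal error bound. At $u_h^*$ the Hessian is uniformly coercive, with bounds of the form \eqref{eq:spectral-lower-upper} where $\beta$ is replaced by $2-p$. Since $S''$ is bounded, $J_\varepsilon''$ is Lipschitz on $\mathring V_h$ (using $C_{\rm inv}$). Lemma~\ref{lem:tangent-error} together with the inverse estimate gives $\|A_k-\mathcal H_k\|\le C\delta_k$. Writing
\[
u_{h,k}+\hat u_{h,k}-u_h^*=A_k^{-1}\bigl[(A_k-\mathcal H_k)(u_{h,k}-u_h^*)+\mathcal H_k(u_{h,k}-u_h^*)-J_\varepsilon'(u_{h,k})\bigr],
\]
I bound the first bracket term by $C\delta_k\|u_{h,k}-u_h^*\|_h$. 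The remaining terms form a Taylor remainder of $J_\varepsilon'$ about $u_{h,k}$, bounded by $C\|u_{h,k}-u_h^*\|_h^2$. The inverse $A_k^{-1}$ is uniformly bounded by Theorem~\ref{thm:finite-dimensional-lifted-global-convergence}. This is the standard perturbed-Jacobian argument of \cite{Kelley95}, and it yields $\|u_{h,k}+\hat u_{h,k}-u_h^*\|_h\le C\mathcal E_k^2$. Since $\|\hat u_{h,k}\|_h\le C\|u_{h,k}-u_h^*\|_h$, I then insert this into \eqref{eq:delta-recursion} to get $\delta_{k+1}\le C\mathcal E_k^2$, provided $\alpha_k=1$. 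Adding the two bounds gives \eqref{eq:quadratic}.

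Second, and this is the main obstacle, I must show that $\alpha_k=1$ is accepted when $\mathcal E_k\le\rho$. I will follow \cite{NocedalWright06}, using exact Taylor expansion instead of the majorant \eqref{eq:energy-majorization-discrete}, since the majorant only gives $\alpha\le 2(1-\gamma)(1-\beta)$, which may be below one. Write
\[
J_\varepsilon(u_{h,k}+\hat u_{h,k})-J_\varepsilon(u_{h,k})=J_\varepsilon'(u_{h,k})\hat u_{h,k}+\tfrac12\langle\mathcal H_k\hat u_{h,k},\hat u_{h,k}\rangle+O(\|\hat u_{h,k}\|_h^3).
\]
By \eqref{eq:direct-descent-identity}, $J_\varepsilon'(u_{h,k})\hat u_{h,k}=-a_k(\hat u_{h,k},\hat u_{h,k})$. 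Also $\langle\mathcal H_k\hat u,\hat u\rangle=a_k(\hat u,\hat u)+O(\delta_k\|\hat u\|_h^2)$. The right-hand side is therefore
\[
-\tfrac12 a_k(\hat u_{h,k},\hat u_{h,k})+O\bigl((\delta_k+\|\hat u_{h,k}\|_h)\|\hat u_{h,k}\|_h^2\bigr).
\]
The Armijo condition \eqref{eq:armijo} with $\alpha=1$ requires this to be at most $-\gamma a_k(\hat u_{h,k},\hat u_{h,k})$. Because $\gamma<\tfrac12$ and $a_k(\hat u,\hat u)\ge\underline m_h\|\hat u\|_h^2$, this holds once $C(\delta_k+\|\hat u_{h,k}\|_h)\le(\tfrac12-\gamma)\underline m_h$, which fixes $\rho$. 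Note that $\mathcal E_k\le\rho$ also keeps $u_{h,k}$ in a compact neighbourhood where all the constants above are valid.

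Finally, I will shrink $\rho$ so that $C_\star\rho<1$. Then $\mathcal E_k\le\rho$ propagates to all later iterates, and the statement about eventual unit steps follows from $\mathcal E_k\to0$. That convergence comes from Theorem~\ref{thm:finite-dimensional-lifted-global-convergence}, Corollary~\ref{cor:quadrature-flux-convergence}, and continuity of $S\circ\nabla$ at the quadrature points.
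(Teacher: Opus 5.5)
Your proposal is correct and follows essentially the same route as the paper. The primal step is treated as a Newton step whose Jacobian perturbation is bounded by Lemma~\ref{lem:tangent-error}; this is combined with the increment bound $\|\hat u_{h,k}\|_h\le C\|u_{h,k}-u_h^*\|_h$, the consistency recursion \eqref{eq:delta-recursion} under a unit step, and acceptance of the unit step for $\gamma<\tfrac12$. The only difference is that you write out the perturbed-Jacobian error identity and the second-order Taylor argument for Armijo acceptance explicitly, where the paper cites Kelley's Theorem~5.4.1 and the proof of Nocedal--Wright's Theorem~3.6; you also correctly note that the majorant \eqref{eq:energy-majorization-discrete} alone cannot guarantee $\alpha=1$.
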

\begin{proof}
Consider the discrete residual $F_h(u_h):=J_\varepsilon'(u_h)|_{\mathring V_h}$ on the finite-dimensional space $g+\mathring V_h$ with norm $\|\cdot\|_h$. Its Jacobian at $u_{h,k}$ is the operator induced by the bilinear form $\int_\Omega H_k\nabla v_h\cdot\nabla w_h\,\mathrm{d}\bs x$; it is Lipschitz continuous by the bound on $S''$ and the inverse estimate, and $F_h'(u_h^*)$ is invertible since $H_k$ is uniformly positive definite. The primal increment satisfies $\hat u_{h,k}=-B_k^{-1}F_h(u_{h,k})$, where $B_k$ is the operator induced by $a_k$, so the primal iteration is a Newton method with the perturbed Jacobian $B_k=F_h'(u_{h,k})+\Delta_k$, and $\|\Delta_k\|\le\|M_k-H_k\|_\infty\le\frac{2-p}{\varepsilon}\delta_k$ by lemma \ref{lem:tangent-error}. Moreover, by \eqref{eq:direct-descent-identity}, \eqref{eq:Mk-coercive-bounded}, $F_h(u_h^*)=0$ and $\|S'\|\le\varepsilon^{p-2}$,
\begin{equation}\label{eq:increment-bound}
  \|\hat u_{h,k}\|_h\le\frac{\varepsilon^{p-2}}{\underline m_h}\,\|u_{h,k}-u_h^*\|_h .
\end{equation}
Since $\gamma<\tfrac12$ and $\hat u_{h,k}$ is a descent direction by \eqref{eq:direct-descent-identity}, the sufficient decrease condition \eqref{eq:armijo} holds at $\alpha=1$ as soon as $\|u_{h,k}-u_h^*\|_h$ and the relative Jacobian perturbation $\|\Delta_k\hat u_{h,k}\|_{h,\star}/\|\hat u_{h,k}\|_h\le\|\Delta_k\|$ are small enough; see the proof of \cite[Theorem~3.6]{NocedalWright06}, which only uses the Armijo condition. Both quantities are bounded by $C\mathcal E_k$, so there is $\rho>0$ such that $\mathcal E_k\le\rho$ implies $\alpha_k=1$.
With $\alpha_k=1$, the perturbed-Jacobian estimate of \cite[Theorem~5.4.1]{Kelley95} gives, for $\mathcal E_k\le\rho$ (after decreasing $\rho$ if necessary),
\begin{equation*}
  \|u_{h,k+1}-u_h^*\|_h\le K\bigl(\|u_{h,k}-u_h^*\|_h^2+\|\Delta_k\|\,\|u_{h,k}-u_h^*\|_h\bigr)\le C\,\mathcal E_k^2 .
\end{equation*}
By \eqref{eq:delta-recursion} and \eqref{eq:increment-bound},
$\delta_{k+1}\le C(\delta_k+\|u_{h,k}-u_h^*\|_h)\|u_{h,k}-u_h^*\|_h\le C\,\mathcal E_k^2$.
Adding the two estimates gives \eqref{eq:quadratic}.
\end{proof}
The constant $C_\star$ degrades in the two limits that make the problem hard: through the factor $(2-p)/\varepsilon$ in \eqref{eq:tangent-error} and the factor $\varepsilon^{p-3}$ in \eqref{eq:consistency-identity} as $\varepsilon\to0$, and through $\underline m_h$, which carries $1-\beta<p-1$ by \eqref{eq:beta-window}, as $p\to1$. The same dependence on $\varepsilon$ and $p$ is present in the corresponding constants for the standard Newton method.

\subsection{Robustness away from the solution}
\label{ssec:robustness}
Since the results of \cref{ssec:p-Lapl-convergence,ssec:local-convergence} apply equally to standard Newton, the difference between the two methods observed in practice must arise away from the solution.
We next discuss the properties of the two linearizations, focusing on values of $p$ near $1$, where solving \eqref{eq:p-laplace} is particularly difficult.

\noindent
\emph{Prevalence of $\varepsilon$-flat regions.}
First, note that solutions of the $p$-Laplace equation exhibit largely flat regions, especially when $p$ is close to 1. This is evident from the energy formulation \eqref{eq:p-energy}, which converges to the $L^1$-norm of the gradient as $p\to 1$. Such objectives promote sparsity, meaning the gradient vanishes over large parts of the domain and variation concentrates on small sets. In the $\varepsilon$-regularized problem, the relevant notion of flatness corresponds to values of $|\nabla u|$ of an order of magnitude of $\varepsilon$ or smaller. %
This is the threshold used in the discussion below.

\noindent
\emph{Instability of standard Newton.}
Next, it is useful to separate the two distinct ways in which flat regions affect the linearizations. The first is through the scalar prefactor $|\nabla u|_\varepsilon^{p-2}$, 
which is common to all methods considered here and thus cannot account for differences between them.
The second is through the direction $\bs w_\varepsilon=\nabla u/|\nabla u|_\varepsilon$ entering the rank-one term. Where $|\nabla u|\gg\varepsilon$, this direction is influential but well determined and varies smoothly. In deeply flat regions, $|\nabla u|\ll\varepsilon$, it is poorly determined and highly sensitive to changes in $u$, but it is also of little influence since $|\bs w_\varepsilon|\le|\nabla u|/\varepsilon$ suppresses the rank-one term and the Newton methods reduce to the Picard operator. It is when $|\nabla u|\sim\varepsilon$ that $\bs w_\varepsilon$ is simultaneously influential and highly sensitive to small changes in $u$.
The two methods respond differently to this sensitivity. In the standard Newton method,
the rank-one term is $\bs w_\varepsilon\bs w_\varepsilon^\top $ and both of its factors are recomputed from the current iterate.
The Newton direction $\hat u$ may then vary strongly from one iteration to the next, a phenomenon known as zig-zag behavior, which typically makes substantial damping of the step size necessary for convergence. 
In the lifted method, $\bs\lambda$ is updated only through \eqref{eq:lambda-hat}, so the factor $\bs w_{\bs\lambda}$ lags behind $\bs w_\varepsilon$. This matters quantitatively: in the direction of $\nabla u$, the standard Newton tensor has eigenvalue $(p-1)|\nabla u|_\varepsilon^{p-2}$, i.e., it is $1/(p-1)$ times more compliant than the Picard operator exactly where the residual points in flat regions, whereas with a small lagged $\bs\lambda$ the lifted tensor stays close to the Picard operator.
The linearization in lifted Newton thus filters these fluctuations, %
an effect that decays as the iterates approach consistency and is therefore confined to the phase of the iteration before the Newton convergence basin is reached.

\begin{figure}
    \centering
    \includegraphics[width=0.95\linewidth]{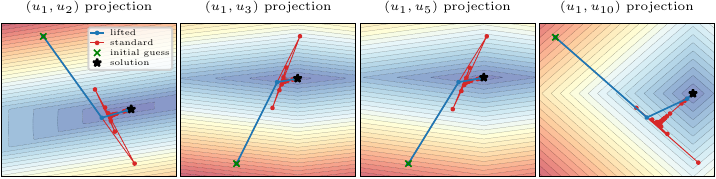}
    \caption{Lifted and standard Newton iterations for a finite difference discretization of the one-dimensional $p$-Laplace problem introduced in appendix \ref{app:one_dim_flux} with $p = 1.01$. Each panel shows the projection of iterates onto the two-dimensional subspace given by two nodal values.
    }
    \label{fig:fd_result}
\end{figure}

\noindent
\emph{An illustrative 1D $p$-Laplace problem.}
Appendix \ref{app:one_dim_flux} analyzes a one-dimensional problem, discretized with finite differences, whose solution is constructed to have a plateau. Expanding both linearizations in the small parameter $p-1$ shows that, on the plateau, standard and lifted Newton compute the \emph{same} increment of the flux $|\nabla u|_\varepsilon^{p-2}\nabla u$, namely the one that drives it to zero. They differ in how this flux increment is converted into an increment of $\nabla u$ (note that here $\nabla u=u'$): standard Newton divides by the tangent slope $(p-1)/|\nabla u|$ of the nearly flat constitutive function, so that every gradient on the plateau changes sign and grows by a factor $1/(p-1)$, whereas lifted Newton with a small lagged $\bs\lambda$ divides by the secant slope $1/|\nabla u|$ and sets the plateau gradients to zero up to $O(p-1)$ in one step. The update of the lifting variable then yields $\bs\lambda=O(p-1)$ on the plateau from the first iteration on. \Cref{fig:fd_result} shows both iterations for $p=1.01$ and a random initialization within an interval of length $0.1$ around the solution: standard Newton exhibits the predicted zig-zag and needs 27 iterations, while lifted Newton converges in 4. The first lifted step, for which $\bs\lambda_0=\bs 0$, is a Picard step and points in the same direction as the first standard Newton step.

\section{Numerical results: $p$-Laplace}
\label{sec:numerics-p-laplace}

\subsection{Implementation and comparisons}
\label{ssec:plaplace-setup}
We implement the lifted Newton method for the $p$-Laplace (and for the $p$-Stokes problem in \cref{sec:p-stokes} below) with the finite element (FE) library Firedrake \cite{FiredrakeUserManual}. The FE method represents the discrete solution $u_h$ in a finite element space $V_h$ and solves a weak formulation of the associated PDE. For the lifted Newton method, given the discrete variables $u_h$ and $\bs \lambda_h$, we seek an increment $\hat{u}_h\in \mathring{V}_h$, where $\mathring{V}_h$ denotes the subspace of $V_h$ that incorporates homogeneous Dirichlet boundary conditions. The increment $\hat{u}_h$ solves the linear system \eqref{eq:lifted-linear-sys} with the symmetrized diffusion tensor \eqref{eq:lifted-tensor-sym}. Writing $\bs w_{\varepsilon,h}$ and $\bs w_{\bs\lambda,h}$ for the discrete counterparts of the directions \eqref{eq:w-directions}, its weak form reads
\begin{equation}\label{eq:weak-lifted-linear-sys}
    \begin{split}
    \int_\Omega |\nabla u_h|_\varepsilon^{p-2}\Bigl(I + (p-2)\,\sym\bigl(\bs w_{\bs\lambda,h}\bs w_{\varepsilon,h}^\top\bigr)\Bigr)  \nabla \hat u_h \cdot \nabla v_h\,\mathrm{d}\bs x & \\ = \int_\Omega fv_h\,\mathrm{d}\bs x - \int_\Omega |\nabla u_h|_\varepsilon^{p-2}\nabla u_h \cdot \nabla v_h\,\mathrm{d}\bs x &\quad \forall v_h\in\mathring{V}_h.
    \end{split}
\end{equation}
Equation \eqref{eq:weak-lifted-linear-sys} can be implemented with a few lines of code, thanks to Firedrake's use of unified form language \cite{alneas2012}, a purely symbolic language for writing variational forms of PDEs, and PETSc \cite{petsc-user-ref}, a numerical linear algebra library. 

It remains to specify how the discrete lifting variable $\bs \lambda_h$ is represented. Note that the computation of $\bs \lambda_h$ is purely algebraic, in the sense that \eqref{eq:lambda-hat} does not involve derivatives of $\bs\lambda_h$. Moreover, the linear system \eqref{eq:weak-lifted-linear-sys} only requires the values of $\bs \lambda_h$ at quadrature points. For these reasons, we do not introduce $\bs\lambda_h$ as a FE function but simply store and update its values at quadrature points. In Firedrake, this is made possible with the so-called ``quadrature element'' functionality, as illustrated in the open-source implementation we provide.\footnote{The repository is available at \href{https://github.com/gonzalogddiego/lifted_newton}{https://github.com/gonzalogddiego/lifted\_newton}.}
Quadrature elements are special function spaces in Firedrake defined only on the points of a numerical quadrature rule. This contrasts with standard FE functions, characterized in terms of degrees of freedom (i.e.~linear forms defined on the FE space) and defined for all $\bs{x}\in\Omega$. Quadrature elements are a standard feature in many FE libraries; they are also available in FEniCS \cite{LoggEtal2012}, and an equivalent functionality can be found in MFEM \cite{AndersonEtAl2021}. 
We point out that the relationship between the variables $u$ and $
\bs \lambda$ differs from that found in mixed FE problems \cite{BrezziFortin91}. There, FE spaces are introduced for both variables, and these spaces must satisfy so-called inf-sup conditions to ensure the stability of the discrete problem. The explicit and purely algebraic nature of \eqref{eq:lambda-hat} dispenses with the need for such conditions, simplifying the construction of discrete lifted Newton schemes considerably.    
In the following, we report numerical comparisons of iterative schemes for solving the $p$-Laplace problem \eqref{eq:p-laplace} for different values of $p$ with $1<p<2$, for a problem in two and one in three dimensions. We study the influence of uniform and local mesh refinement and use linear and quadratic elements.
We present results for the Picard method \eqref{eq:picard-intro}, the classical Newton method \eqref{eq:p-Laplace-Newton}, and the lifted Newton iteration \eqref{eq:lifted-linear-sys}. Additionally, we compare the recently proposed Picard--Newton method \cite{PollockRebholzTuXiao25}, which alternates between Picard and Newton steps to improve the robustness of Newton's method. The method has been shown to be particularly useful for significantly nonlinear stationary Navier--Stokes problems. \Cref{tab:methods} summarizes the methods compared in this section for solving the $p$-Laplace equation, and some of these approaches are later applied to the $p$-Stokes problems in \cref{sec:p-Stokes-numerics}.
\begin{table}[bh]
    \centering
    \caption{Abbreviations used for the iterative methods compared in the numerical experiments.}
    \label{tab:methods}
    \begin{tabular}{lll}
        \toprule
        Name & Method & Iteration \\
        \midrule
        SN & Standard Newton  & \eqref{eq:p-Laplace-Newton} \\
        LN & Lifted Newton    & \eqref{eq:lifted-linear-sys}, \cref{ssec:lifted-algorithm} \\
        P  & Picard           & \eqref{eq:picard-intro} \\
        PN & Picard--Newton   & \cite{PollockRebholzTuXiao25} \\
        \bottomrule
        \vspace{-6ex}
    \end{tabular}
\end{table}

In all examples, we report the iteration counts, which are equivalent to the number of linear solves. This is justified since all methods require the solution of a symmetric, positive definite system for the degrees of freedom corresponding to the discretization of $u$. One step of the Picard--Newton method requires solving two linear systems, and this is counted as two iterations. Our implementation uses a direct solver for the symmetric linear systems arising in each iteration.
Unless otherwise specified, we use the parameters $\varepsilon=10^{-6}$ and $\beta=1-10^{-4}$. We use a slight modification of the Armijo line search and use
\begin{equation}\label{eq:Armijo-mod}
    J_\varepsilon(u_{h,k}+\alpha_k\hat u_{h,k})\le J_\varepsilon(u_{h,k})+\gamma\,\alpha_k\,J_\varepsilon'(u_{h,k})\hat u_{h,k}+\tau_{\,\rm ls},
\end{equation}
where $\tau_{\,\rm ls}$ at the order of machine precision (we use $\tau_{\,\rm ls}=10^{-15}$). Including this tolerance prevents the line search from failing because of rounding errors when the iterate is extremely close to the solution. This typically occurs in the final step, where the finite-precision value of $J_\varepsilon(u_{h,k})$ may not change, even though the residual can still be decreased by accepting a full update step.

In our numerical experiments we set $\gamma=0$ in \eqref{eq:Armijo-mod}. While this line-search variant does not enforce a prescribed fraction of the predicted decrease, the coercivity of $M_k$ ensures that $\hat u_{h,k}$ is a descent direction at each iteration, so the backtracking procedure is well defined. For several of the tests reported below, we also confirmed that taking $\gamma=10^{-4}$ yields the same number of iterations.
Iterations are terminated when the relative $H^{-1}$-norm of the residual reaches $10^{-6}$, or after 500 iterations, indicating that the method takes too long to converge.

\subsection{Two-dimensional $p$-Laplace equation example}
\label{ssec:plaplace-2d}

In this subsection, we use a two-dimensional example for the $p$-Laplace equation \eqref{eq:p-laplace} taken from \cite{Loisel20}. Let $\Omega=(0,1)^2$,  $f=0$, and we enforce a Dirichlet condition given by the characteristic function $g=1_X$, where
\begin{equation*}
  X=\bigl(\{0\}\times[0.25,0.75]\bigr)
    \cup\bigl([0.6,1]\times[0.25,1]\bigr).
\end{equation*}

\begin{figure}[bt]
    \centering
    \begin{tikzpicture}
    \def\ConvPanelWidth{0.242\textwidth}
    \definecolor{convBlue}{HTML}{0077BB}
    \definecolor{convOrange}{HTML}{EE7733}
    \definecolor{convTeal}{HTML}{009988}
    \definecolor{convPurple}{HTML}{AA3377}
    \pgfplotstableread{data/rel_grad_norms_plaplace_2d.txt}%
        \loiselRelativeGradientData
    \pgfplotstableread{data/step_lengths_plaplace_2d.txt}%
        \loiselStepSizeData
    \pgfplotsset{
        convergence axis/.style={
            width=\ConvPanelWidth,
            height=0.200\textwidth,
            scale only axis,
            axis line style={black!65, line width=0.35pt},
            tick style={black!65, line width=0.35pt},
            grid=major,
            major grid style={black!10, line width=0.25pt},
            xlabel={Iteration},
            title style={font=\small, yshift=-1pt},
            label style={font=\footnotesize},
            tick label style={font=\footnotesize},
            scaled ticks=false,
            unbounded coords=jump,
            filter discard warning=false,
        },
        convergence curve/.style={
            solid,
            no marks,
            line width=0.85pt,
            line cap=round,
            line join=round,
        },
        standard curve/.style={
            convergence curve,
            color=convBlue,
        },
        lifted curve/.style={
            convergence curve,
            color=convOrange,
        },
        picard curve/.style={
            convergence curve,
            color=convTeal,
        },
        picard newton curve/.style={
            convergence curve,
            color=convPurple,
        },
        convergence legend/.style={
            draw=black!25,
            fill=white,
            rounded corners=1pt,
            font=\footnotesize,
            cells={anchor=west},
            legend image code/.code={
                \draw[
                    /pgfplots/mesh=false,
                    bar width=3pt,
                    bar shift=0pt,
                    mark repeat=2,
                    mark phase=2,
                    ##1
                ] plot coordinates {
                    (0cm,0cm) (0.125cm,0cm) (0.25cm,0cm)
                };
            },
            column sep=0.25em,
            row sep=-1.5pt,
            inner xsep=1.5pt,
            inner ysep=1.5pt,
        },
    }
        \begin{groupplot}[
            group style={
                group size=3 by 2,
                horizontal sep=0.040\textwidth,
                vertical sep=0.020\textwidth,
            },
            convergence axis,
        ]
        \nextgroupplot[
            xlabel={},
            xticklabels=\empty,
            ylabel={Relative residual norm},
            xmin=0, xmax=18.5,
            xtick={0,5,10,15},
            ymode=log,
            ymin=5e-8, ymax=2,
            ytick={1e-7,1e-5,1e-3,1e-1,1e0},
            legend to name=loiselLegend,
            legend columns=1,
            legend style={convergence legend},
        ]
        \addplot[standard curve]
            table[x=iteration, y=standard_p160]
            {\loiselRelativeGradientData};
        \addlegendentry{SN}
        \addplot[lifted curve]
            table[x=iteration, y=lifted_p160]
            {\loiselRelativeGradientData};
        \addlegendentry{LN}
        \addplot[picard curve]
            table[x=iteration, y=picard_p160]
            {\loiselRelativeGradientData};
        \addlegendentry{P}
        \addplot[picard newton curve]
            table[x=iteration, y=pnls_p160]
            {\loiselRelativeGradientData};
        \addlegendentry{PN}

        \nextgroupplot[
            xlabel={},
            xticklabels=\empty,
            xmin=0, xmax=45,
            xtick={0,10,20,30,40},
            ymode=log,
            ymin=5e-8, ymax=2,
            ytick={1e-7,1e-5,1e-3,1e-1,1e0},
            yticklabels=\empty,
        ]
        \addplot[standard curve]
            table[x=iteration, y=standard_p110]
            {\loiselRelativeGradientData};
        \addplot[lifted curve]
            table[x=iteration, y=lifted_p110]
            {\loiselRelativeGradientData};
        \addplot[picard curve]
            table[x=iteration, y=picard_p110]
            {\loiselRelativeGradientData};
        \addplot[picard newton curve]
            table[x=iteration, y=pnls_p110]
            {\loiselRelativeGradientData};

        \nextgroupplot[
            xlabel={},
            xticklabels=\empty,
            xmin=0, xmax=100,
            xtick={0,25,50,75,100},
            ymode=log,
            ymin=5e-8, ymax=2,
            ytick={1e-7,1e-5,1e-3,1e-1,1e0},
            yticklabels=\empty,
        ]
        \addplot[standard curve]
            table[x=iteration, y=standard_p101]
            {\loiselRelativeGradientData};
        \addplot[lifted curve]
            table[x=iteration, y=lifted_p101]
            {\loiselRelativeGradientData};
        \addplot[picard curve]
            table[x=iteration, y=picard_p101]
            {\loiselRelativeGradientData};
        \addplot[picard newton curve]
            table[x=iteration, y=pnls_p101]
            {\loiselRelativeGradientData};

        \nextgroupplot[
            ylabel={Step size},
            xmin=0, xmax=18.5,
            xtick={0,5,10,15},
            ymode=log,
            log basis y=2,
            ymin=0.012, ymax=1.2,
            ytick={0.015625,0.0625,0.25,1},
            yticklabels={$1/64$,$1/16$,$1/4$,$1$},
        ]
        \addplot[standard curve]
            table[x=iteration,y=standard_p160]
            {\loiselStepSizeData};
        \addplot[lifted curve]
            table[x=iteration,y=lifted_p160]
            {\loiselStepSizeData};
        \addplot[picard newton curve]
            table[x expr=\thisrow{iteration}*2,y=pnls_newton_p160]
            {\loiselStepSizeData};

        \nextgroupplot[
            xmin=0, xmax=45,
            xtick={0,10,20,30,40},
            ymode=log,
            log basis y=2,
            ymin=0.012, ymax=1.2,
            ytick={0.015625,0.0625,0.25,1},
            yticklabels=\empty,
        ]
        \addplot[standard curve]
            table[x=iteration,y=standard_p110]
            {\loiselStepSizeData};
        \addplot[lifted curve]
            table[x=iteration,y=lifted_p110]
            {\loiselStepSizeData};
        \addplot[picard newton curve]
            table[x expr=\thisrow{iteration}*2,y=pnls_newton_p110]
            {\loiselStepSizeData};

        \nextgroupplot[
            xmin=0, xmax=100,
            xtick={0,25,50,75,100},
            ymode=log,
            log basis y=2,
            ymin=0.012, ymax=1.2,
            ytick={0.015625,0.0625,0.25,1},
            yticklabels=\empty,
        ]
        \addplot[standard curve]
            table[x=iteration,y=standard_p101]
            {\loiselStepSizeData};
        \addplot[lifted curve]
            table[x=iteration,y=lifted_p101]
            {\loiselStepSizeData};
        \addplot[picard newton curve]
            table[x expr=\thisrow{iteration}*2,y=pnls_newton_p101]
            {\loiselStepSizeData};
        \end{groupplot}

        \node[anchor=south, inner sep=0pt] (sol160)
            at ([yshift=0.025\textwidth]group c1r1.north)
            {\includegraphics[
                width=\ConvPanelWidth,
                trim=8.3bp 8.3bp 8.3bp 8.3bp,
                clip
            ]{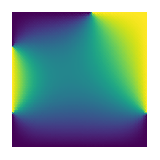}};
        \node[anchor=south, inner sep=0pt] (sol110)
            at ([yshift=0.025\textwidth]group c2r1.north)
            {\includegraphics[
                width=\ConvPanelWidth,
                trim=8.3bp 8.3bp 8.3bp 8.3bp,
                clip
            ]{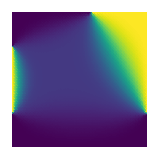}};
        \node[anchor=south, inner sep=0pt] (sol101)
            at ([yshift=0.025\textwidth]group c3r1.north)
            {\includegraphics[
                width=\ConvPanelWidth,
                trim=8.3bp 8.3bp 8.3bp 8.3bp,
                clip
            ]{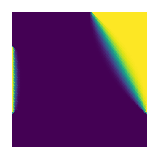}};
        \node[anchor=south, font=\small, inner sep=0pt]
            at ([yshift=3pt]sol160.north) {$p=1.6$};
        \node[anchor=south, font=\small, inner sep=0pt]
            at ([yshift=3pt]sol110.north) {$p=1.1$};
        \node[anchor=south, font=\small, inner sep=0pt]
            at ([yshift=3pt]sol101.north) {$p=1.01$};
        \node[anchor=west, inner sep=0pt] (loiselColorbar)
            at ([xshift=0.010\textwidth]sol101.east)
            {\includegraphics[
                height=\ConvPanelWidth,
                trim=2.9bp 6.5bp 15.65bp 5.1bp,
                clip
            ]{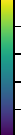}};
        \foreach \position/\value in {
            0/{$0.0$},.2/{$0.2$},.4/{$0.4$},
            .6/{$0.6$},.8/{$0.8$},1/{$1.0$}
        }{
            \node[anchor=west, font=\footnotesize, inner sep=0pt]
                at ([xshift=3pt]$(loiselColorbar.south east)!\position!
                    (loiselColorbar.north east)$) {\value};
        }

        \node[anchor=south east, inner sep=0pt]
            at ([xshift=-3pt,yshift=3pt]group c1r2.south east)
            {\pgfplotslegendfromname{loiselLegend}};
    \end{tikzpicture}
    \caption{Solutions and convergence behavior for the two-dimensional $p$-Laplace example on the $N=50$ grid ($10{,}000$ triangles). The top row shows the solutions for $p=1.6$, $p=1.1$, and $p=1.01$. The middle row shows the relative $H^{-1}$-norm of the residual, i.e., $\|r_{h,k}\|_{H^{-1}}/\|r_{h,0}\|_{H^{-1}}$
    with a shared vertical scale. Normalization is performed separately for each $p$; all methods share the same initial iterate within a column, so every curve starts at one. The bottom row shows the accepted line-search step sizes on a shared base-2 vertical scale. Picard is omitted from the bottom row because it does not use line search; for Picard--Newton, the step size of the Newton substep is shown. Histories extending beyond a panel's horizontal range are truncated; complete iteration counts are given in \cref{tab:p-Lapl-2d}.}
    \label{fig:p-Lapl-2d}
\end{figure}

We initialize the primal solution by $u_0=0$ and impose the 
Dirichlet boundary condition strongly. The lifting variable is initialized as $\bs\lambda_0=\bs 0$, resulting in the first iteration being a Picard step as discussed at the end of \cref{ssec:lifted-algorithm}. Other initialization strategies we tried resulted in the same, or nearly the same, convergence behavior.
We use continuous piecewise-linear elements on uniform $N\times N$ regular grids with $N\in\{25,50,100\}$ and crossed diagonals. Each square of the Cartesian partition is split into four triangles, so the three meshes contain $4N^2=2{,}500$, $10{,}000$, and $40{,}000$ triangles, respectively.

In \cref{fig:p-Lapl-2d}, we show the solutions for several values of $p$ together with the performance of the different solvers on the $N=50$ grid. One observes that, as $p$ tends to $1$, the transition layer becomes increasingly sharp, which makes the problem more difficult to solve. The iteration counts on all three grids are reported in \cref{tab:p-Lapl-2d}.
When $p$ approaches $1$, the iteration counts rise steeply for all methods, with the exception of LN, whose counts grow only mildly on every grid. The linear convergence of P degrades as $p\to 1$, so it fails to achieve the prescribed relative tolerance for $p=1.01$ on all three grids. The hybrid PN offers a substantial improvement over both P and SN, but for $p\approx 1$ it is still less efficient than LN.
The line search step lengths in \cref{fig:p-Lapl-2d} explain the different convergence behaviors between SN and LN. For $p=1.01$, SN spends a long initial phase accepting strongly damped steps; many accepted steps are approximately $1/32$. On the other hand, LN takes only a few damped steps before accepting full steps, which accelerates the convergence.

In \cref{tab:p-Lapl-2d} we summarize the performance of the different methods over a range of $p$ values; each entry lists the counts for $N=25$, $50$, and $100$. For $p=1.6$, the spread of required iterations is modest on every grid, whereas the discrepancies between them increase markedly as $p$ approaches $1$. It can also be seen that LN is substantially less sensitive to mesh refinement.

\begin{table}[t]
    \centering
    \caption{Number of iterations needed for convergence by different methods applied to the two-dimensional $p$-Laplace problem. Each table entry gives the iteration counts for the $N=25$, $N=50$, and $N=100$ grids, reported as X/X/X; the corresponding meshes have $2{,}500$, $10{,}000$, and $40{,}000$ triangles. The label ``n.c.'' indicates that the specified tolerance was not achieved within 500 iterations.}  \label{tab:p-Lapl-2d}
    \begin{tabular}{lrrrrr}
        \toprule
        Method & $p=1.6$ & $p=1.3$ & $p=1.1$ & $p=1.05$ & $p=1.01$ \\
        \midrule
        SN & 15/18/22 & 23/29/33 & 34/40/57 & 79/87/105 & 283/329/358 \\
        LN & 9/11/11 & 10/10/10 & 18/15/16 & 18/19/20 & 23/21/24 \\
        P & 15/15/15 & 38/38/38 & 112/119/123 & 203/218/232 & n.c./n.c./n.c. \\
        PN & 8/8/8 & 12/14/18 & 24/28/30 & 32/38/46 & 62/80/92 \\
        \bottomrule
    \end{tabular}
\end{table}

Finally, in \cref{tab:p-Lapl-2d-epsilon}, we show the number of iterations required for different choices of regularization parameters $\varepsilon$. For $p\approx 1$, a smaller $\varepsilon$ makes the problem more singular and generally increases the number of iterations required for convergence. In particular, for $\varepsilon=10^{-10}$ and $p=1.01$, the residuals of both methods fail to reach the prescribed tolerance, likely due to roundoff errors for these extreme parameter values. This shows the necessity of  the $\varepsilon$-regularization for solver robustness.

\begin{table}[ht]
    \centering
    \caption{Comparison of the number of iterations required by SN and LN for various regularization parameters $\varepsilon$ in the two-dimensional $p$-Laplace example for $N=50$.
    As done throughout, the iteration is terminated when the relative residual reaches $10^{-6}$; the entries marked with $\dagger$ correspond to a relative residual of $10^{-5}$. For these tests,  the residual does not decrease to $10^{-6}$, likely due to rounding errors due to extreme values of $p$ and $\varepsilon$.}  \label{tab:p-Lapl-2d-epsilon}
    \begin{tabular}{lcrrrrr}
        \toprule
        Method & $\varepsilon$ & $p=1.6$ & $p=1.3$ & $p=1.1$ & $p=1.05$ & $p=1.01$ \\
        \midrule
        SN & $10^{-4}$  & 19 & 27 & 38 & 65  & 206 \\
        SN & $10^{-6}$  & 18 & 29 & 40 & 87  & 329 \\
        SN & $10^{-8}$  & 18 & 21 & 41 & 112 & 443 \\
        SN & $10^{-10}$ & 17 & 22 & 42 & 127 & n.c.$^\dagger$ \\
        \addlinespace
        LN & $10^{-4}$  & 8  & 11 & 12 & 15 & 18 \\
        LN & $10^{-6}$  & 11 & 10 & 15 & 19 & 21 \\
        LN & $10^{-8}$  & 9  & 11 & 19 & 21 & 31 \\
        LN & $10^{-10}$ & 9  & 11 & 21 & 23 & 30$^\dagger$ \\
        \bottomrule
    \end{tabular}
\end{table}

\subsection{Three-dimensional $p$-Laplace equation example}
\label{ssec:plaplace-3d}

In this subsection, we consider a three-dimensional $p$-Laplace test problem and compare some of the solution methods from \cref{tab:methods} for two finite element spaces with different polynomial orders on a locally refined mesh.  We use $\Omega=(0,1)^3$, again set $f=0$ and use the Dirichlet boundary data $g=1_{X}$, where
\begin{equation*}
    X=\left\{(x,y,z)\in [0,1]^3: x+y+z\geq \tfrac{5}{2}\right\}.
\end{equation*}
Similar to the previous two-dimensional example, this corner-cutting problem produces a sharp transition close to
$
    \Gamma:=\bigl\{(x,y,z)\in[0,1]^3:x+y+z=\frac{5}{2}\bigr\}
$,
especially for $p\approx1$. To properly refine this transition zone, we use a locally refined finite element mesh. We start from a uniform $5\times5\times5$ Cartesian partition of the cube $\Omega$ and divide each cube into $6$ tetrahedral cells, resulting in a coarse mesh with $750$ cells. We then perform three successive local refinement passes. At each pass, a tetrahedron is marked when the range of $x+y+z$ over its vertices intersects the plane-centered slab of perpendicular half-width $0.01$; in other words, we only refine every cell that is cut by $\Gamma$ since the slab is very thin. The resulting mesh used in the computations contains $N=16{,}848$ tetrahedra, of which $3{,}612$ intersect $\Gamma$. Plane-intersecting cells have a diameter of $h_K=\sqrt{3}/40\approx0.0433$. On this locally refined mesh, we use both continuous piecewise-linear (CG1) and piecewise-quadratic (CG2) finite element spaces, whose degrees of freedom are $3{,}352$ and $24{,}406$, respectively. The mesh is visualized in the top left of \cref{fig:p-Lapl-3d}.

We use the same initializations %
as in \cref{ssec:plaplace-2d}. The CG2~solutions and the convergence behavior for $p\in\{1.6,1.1,1.01\}$ are shown in \cref{fig:p-Lapl-3d}, while \cref{tab:p-Lapl-3d} reports the iteration counts of all methods for both CG1 and CG2. As in the two-dimensional example, LN converges in substantially fewer iterations than SN, and the difference becomes more significant as $p$ approaches $1$.

The results in \cref{tab:p-Lapl-3d} also show that LN is robust to the polynomial degree: its CG1 and CG2 iteration counts differ by at most three. For $p=1.01$, lifted Newton requires $21$ linear solves with CG1 and $24$ with CG2, compared to $257$ and $242$, respectively, for SN. P does not reach the prescribed tolerance within $500$ iterations for either discretization, whereas PN converges in $50$ linear solves with CG1 and $44$ with CG2.

\begin{figure}[t]
    \centering
    \begin{tikzpicture}
    \def\ConvThreeDPanelWidth{0.242\textwidth}
    \definecolor{convThreeDBlue}{HTML}{0077BB}
    \definecolor{convThreeDOrange}{HTML}{EE7733}
    \definecolor{convThreeDTeal}{HTML}{009988}
    \definecolor{convThreeDPurple}{HTML}{AA3377}
    \pgfplotstableread{data/rel_grad_norms_plaplace_3d.txt}%
        \threeDGradientData
    \pgfplotstableread{data/step_lengths_plaplace_3d.txt}%
        \threeDStepSizeData
    \pgfplotsset{
        three d convergence axis/.style={
            width=\ConvThreeDPanelWidth,
            height=0.200\textwidth,
            scale only axis,
            axis line style={black!65, line width=0.35pt},
            tick style={black!65, line width=0.35pt},
            grid=major,
            major grid style={black!10, line width=0.25pt},
            xlabel={Iteration},
            label style={font=\footnotesize},
            tick label style={font=\footnotesize},
            scaled ticks=false,
            unbounded coords=jump,
            filter discard warning=false,
        },
        three d convergence curve/.style={
            solid,
            no marks,
            line width=0.85pt,
            line cap=round,
            line join=round,
        },
        three d standard curve/.style={
            three d convergence curve,
            color=convThreeDBlue,
        },
        three d lifted curve/.style={
            three d convergence curve,
            color=convThreeDOrange,
        },
        three d picard curve/.style={
            three d convergence curve,
            color=convThreeDTeal,
        },
        three d picard newton curve/.style={
            three d convergence curve,
            color=convThreeDPurple,
        },
        three d convergence legend/.style={
            draw=black!25,
            fill=white,
            rounded corners=1pt,
            font=\footnotesize,
            cells={anchor=west},
            legend image code/.code={
                \draw[
                    /pgfplots/mesh=false,
                    bar width=3pt,
                    bar shift=0pt,
                    mark repeat=2,
                    mark phase=2,
                    ##1
                ] plot coordinates {
                    (0cm,0cm) (0.125cm,0cm) (0.25cm,0cm)
                };
            },
            column sep=0.25em,
            row sep=-1.5pt,
            inner xsep=1.5pt,
            inner ysep=1.5pt,
        },
    }
        \begin{groupplot}[
            group style={
                group size=3 by 2,
                horizontal sep=0.040\textwidth,
                vertical sep=0.020\textwidth,
            },
            three d convergence axis,
        ]
        \nextgroupplot[
            xlabel={},
            xticklabels=\empty,
            ylabel={Relative residual norm},
            xmin=0, xmax=16.5,
            xtick={0,5,10,15},
            ymode=log,
            ymin=5e-8, ymax=2,
            ytick={1e-7,1e-5,1e-3,1e-1,1e0},
            legend to name=threeDLegend,
            legend columns=1,
            legend style={three d convergence legend},
        ]
        \addplot[three d standard curve]
            table[x=iteration, y=standard_p160]
            {\threeDGradientData};
        \addlegendentry{SN}
        \addplot[three d lifted curve]
            table[x=iteration, y=lifted_p160]
            {\threeDGradientData};
        \addlegendentry{LN}
        \addplot[three d picard curve]
            table[x=iteration, y=picard_p160]
            {\threeDGradientData};
        \addlegendentry{P}
        \addplot[three d picard newton curve]
            table[x=iteration, y=pnls_p160]
            {\threeDGradientData};
        \addlegendentry{PN}

        \nextgroupplot[
            xlabel={},
            xticklabels=\empty,
            xmin=0, xmax=100,
            xtick={0,25,50,75,100},
            ymode=log,
            ymin=5e-8, ymax=2,
            ytick={1e-7,1e-5,1e-3,1e-1,1e0},
            yticklabels=\empty,
        ]
        \addplot[three d standard curve]
            table[x=iteration, y=standard_p110]
            {\threeDGradientData};
        \addplot[three d lifted curve]
            table[x=iteration, y=lifted_p110]
            {\threeDGradientData};
        \addplot[three d picard curve]
            table[x=iteration, y=picard_p110]
            {\threeDGradientData};
        \addplot[three d picard newton curve]
            table[x=iteration, y=pnls_p110]
            {\threeDGradientData};

        \nextgroupplot[
            xlabel={},
            xticklabels=\empty,
            xmin=0, xmax=60,
            xtick={0,15,30,45,60},
            ymode=log,
            ymin=5e-8, ymax=2,
            ytick={1e-7,1e-5,1e-3,1e-1,1e0},
            yticklabels=\empty,
        ]
        \addplot[three d standard curve]
            table[x=iteration, y=standard_p101]
            {\threeDGradientData};
        \addplot[three d lifted curve]
            table[x=iteration, y=lifted_p101]
            {\threeDGradientData};
        \addplot[three d picard curve]
            table[x=iteration, y=picard_p101]
            {\threeDGradientData};
        \addplot[three d picard newton curve]
            table[x=iteration, y=pnls_p101]
            {\threeDGradientData};

        \nextgroupplot[
            ylabel={Step size},
            xmin=0, xmax=16.5,
            xtick={0,5,10,15},
            ymode=log,
            log basis y=2,
            ymin=0.012, ymax=1.2,
            ytick={0.015625,0.0625,0.25,1},
            yticklabels={$1/64$,$1/16$,$1/4$,$1$},
        ]
        \addplot[three d standard curve]
            table[x=iteration,y=standard_p160]
            {\threeDStepSizeData};
        \addplot[three d lifted curve]
            table[x=iteration,y=lifted_p160]
            {\threeDStepSizeData};
        \addplot[three d picard newton curve]
            table[x expr=\thisrow{iteration}*2,y=pnls_newton_p160]
            {\threeDStepSizeData};

        \nextgroupplot[
            xmin=0, xmax=100,
            xtick={0,25,50,75,100},
            ymode=log,
            log basis y=2,
            ymin=0.012, ymax=1.2,
            ytick={0.015625,0.0625,0.25,1},
            yticklabels=\empty,
        ]
        \addplot[three d standard curve]
            table[x=iteration,y=standard_p110]
            {\threeDStepSizeData};
        \addplot[three d lifted curve]
            table[x=iteration,y=lifted_p110]
            {\threeDStepSizeData};
        \addplot[three d picard newton curve]
            table[x expr=\thisrow{iteration}*2,y=pnls_newton_p110]
            {\threeDStepSizeData};

        \nextgroupplot[
            xmin=0, xmax=60,
            xtick={0,15,30,45,60},
            ymode=log,
            log basis y=2,
            ymin=0.012, ymax=1.2,
            ytick={0.015625,0.0625,0.25,1},
            yticklabels=\empty,
        ]
        \addplot[three d standard curve]
            table[x=iteration,y=standard_p101]
            {\threeDStepSizeData};
        \addplot[three d lifted curve]
            table[x=iteration,y=lifted_p101]
            {\threeDStepSizeData};
        \addplot[three d picard newton curve]
            table[x expr=\thisrow{iteration}*2,y=pnls_newton_p101]
            {\threeDStepSizeData};
        \end{groupplot}

        \node[anchor=south, inner sep=0pt, outer sep=0pt] (threeDSol160)
            at ([yshift=0.025\textwidth]group c1r1.north)
            {\includegraphics[width=\ConvThreeDPanelWidth]
                {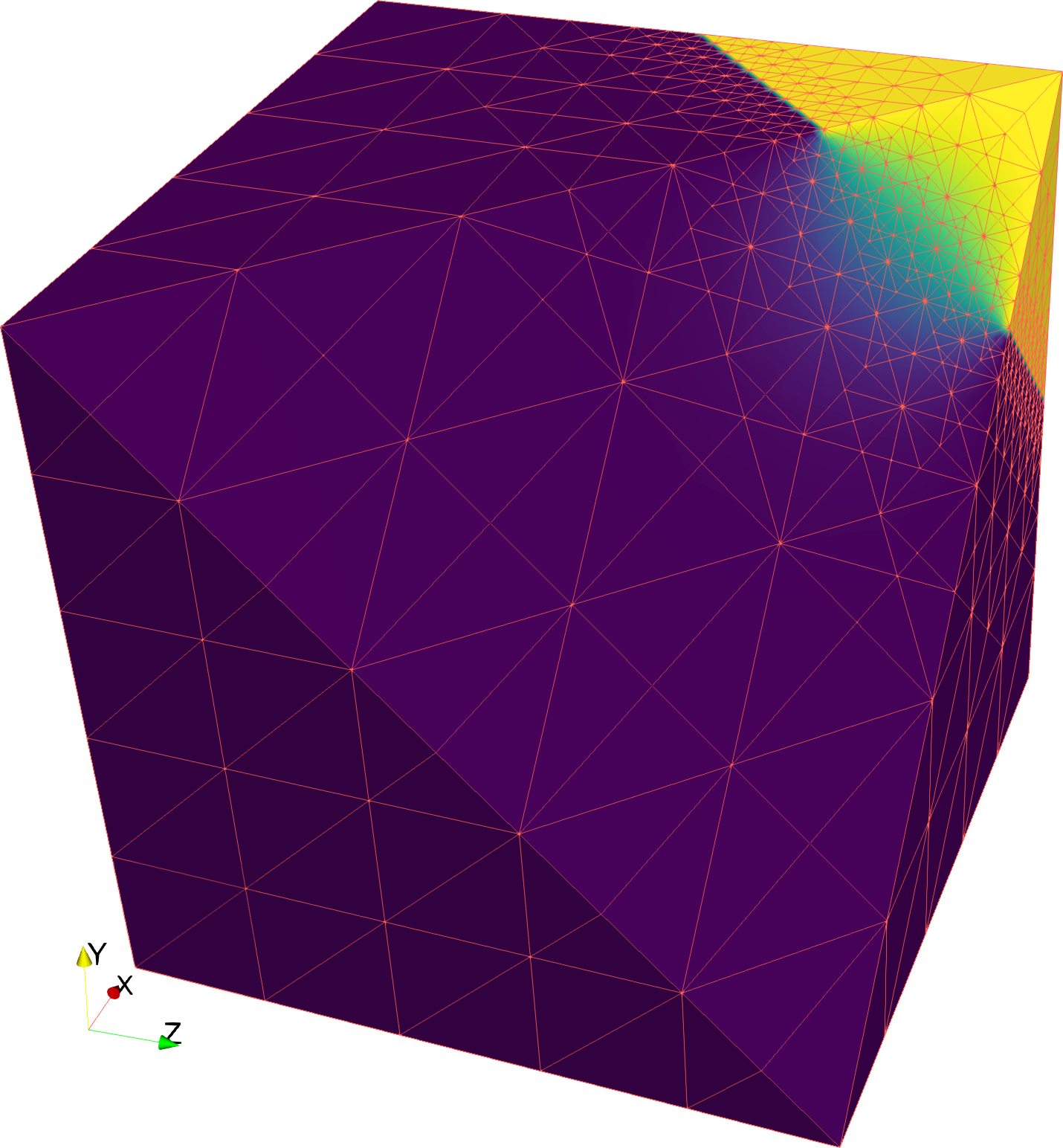}};
        \node[anchor=south, inner sep=0pt, outer sep=0pt] (threeDSol110)
            at ([yshift=0.025\textwidth]group c2r1.north)
            {\includegraphics[width=\ConvThreeDPanelWidth]
                {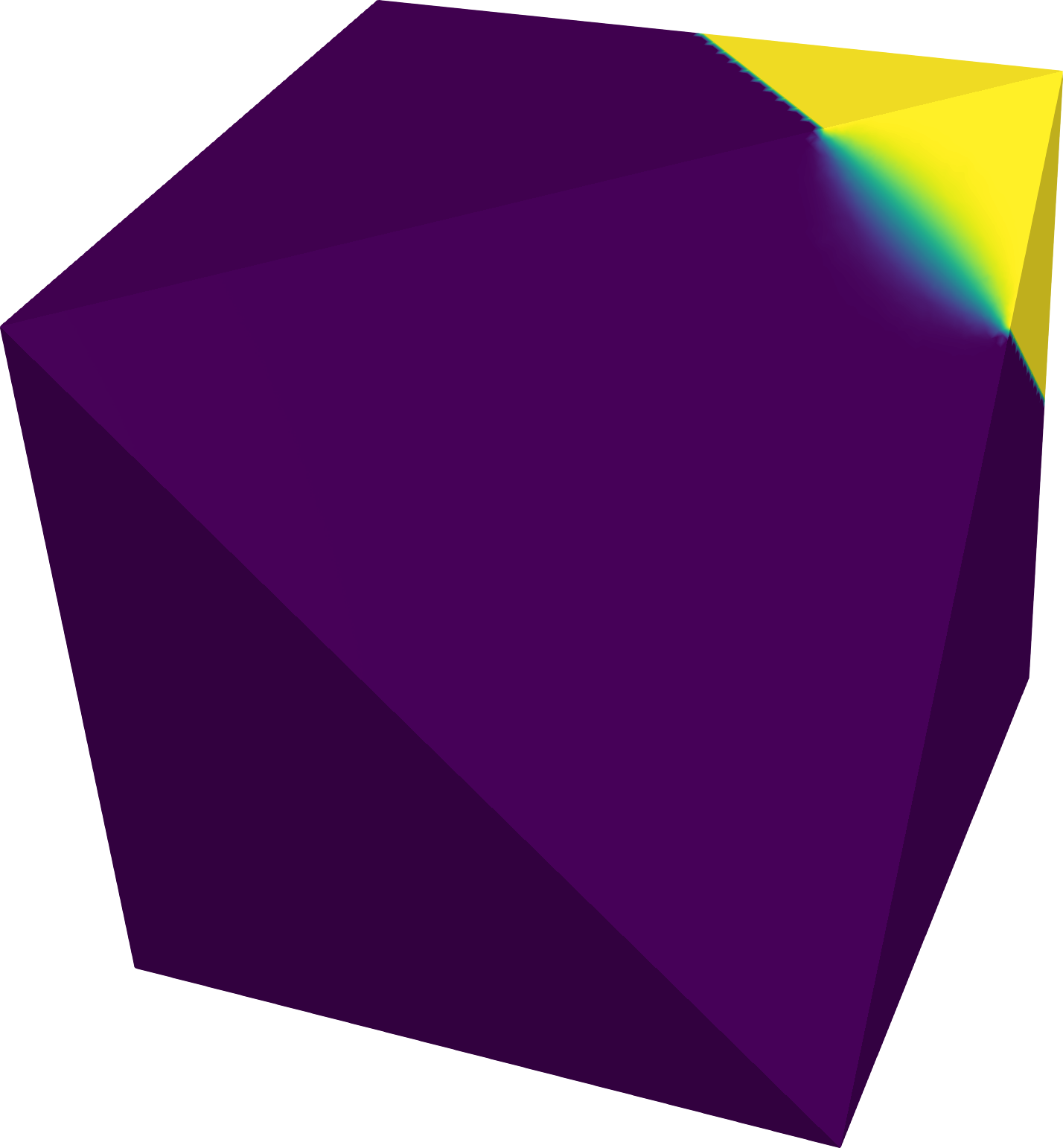}};
        \node[anchor=south, inner sep=0pt, outer sep=0pt] (threeDSol101)
            at ([yshift=0.025\textwidth]group c3r1.north)
            {\includegraphics[width=\ConvThreeDPanelWidth]
                {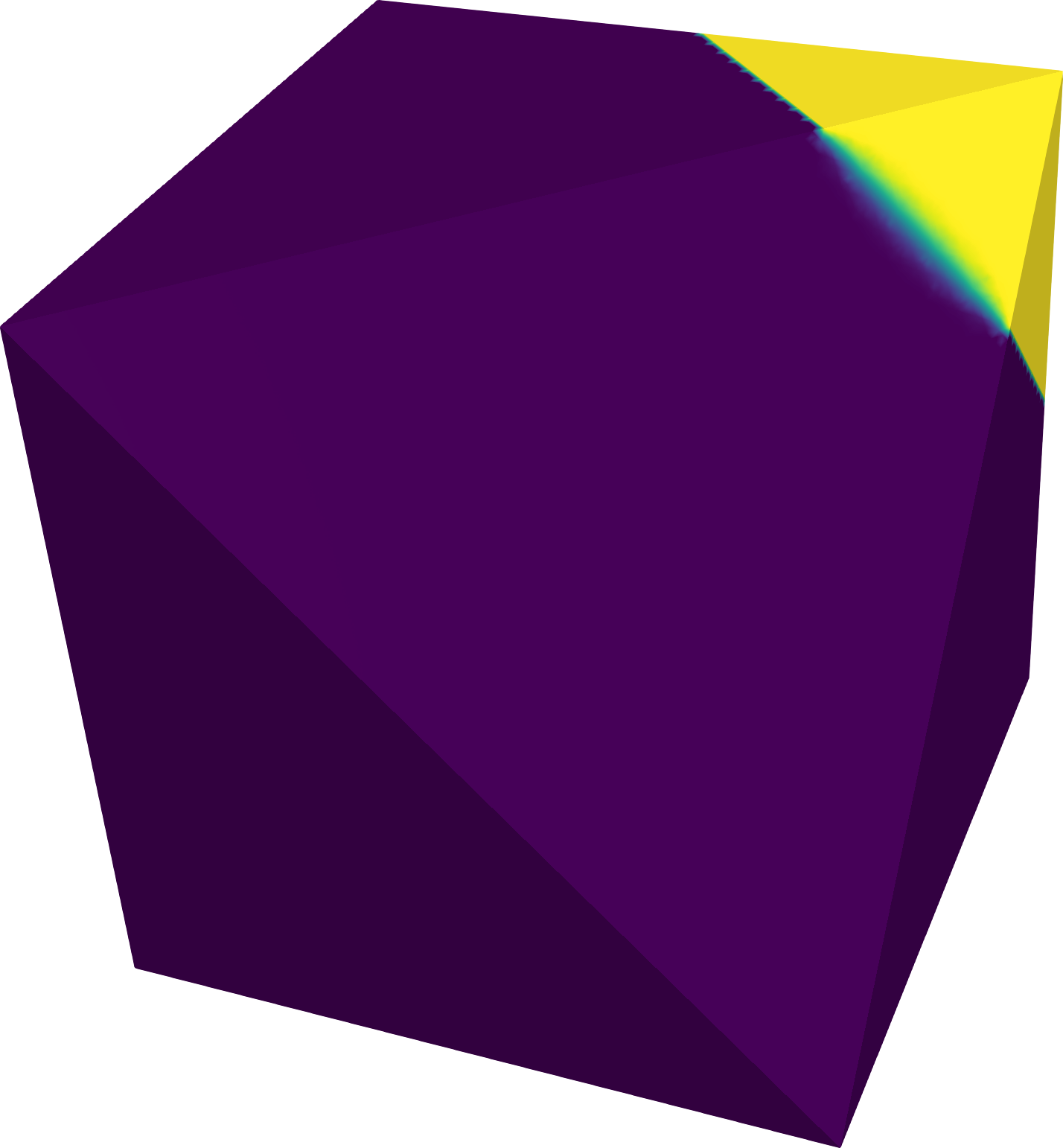}};
        \node[anchor=south, font=\small, inner sep=0pt]
            at ([yshift=3pt]threeDSol160.north) {$p=1.6$};
        \node[anchor=south, font=\small, inner sep=0pt]
            at ([yshift=3pt]threeDSol110.north) {$p=1.1$};
        \node[anchor=south, font=\small, inner sep=0pt]
            at ([yshift=3pt]threeDSol101.north) {$p=1.01$};
        \node[anchor=west, inner sep=0pt] (threeDColorbar)
            at ([xshift=0.010\textwidth]threeDSol101.east)
            {\includegraphics[
                height=\ConvThreeDPanelWidth,
                trim=2.9bp 6.5bp 15.65bp 5.1bp,
                clip
            ]{figs/plaplace_colorbar.pdf}};
        \foreach \position/\value in {
            0/{$0.0$},.2/{$0.2$},.4/{$0.4$},
            .6/{$0.6$},.8/{$0.8$},1/{$1.0$}
        }{
            \node[anchor=west, font=\footnotesize, inner sep=0pt]
                at ([xshift=3pt]$(threeDColorbar.south east)!\position!
                    (threeDColorbar.north east)$) {\value};
        }

        \node[anchor=south east, inner sep=0pt]
            at ([xshift=-3pt,yshift=3pt]group c1r2.south east)
            {\pgfplotslegendfromname{threeDLegend}};
    \end{tikzpicture}
    \caption{Solution and convergence behavior for the three-dimensional $p$-Laplace example using CG2 elements on the locally refined mesh with $16{,}848$ tetrahedra. The top row shows part of the solution fields computed with lifted Newton for $p=1.6$ (with the mesh visualized), $p=1.1$, and $p=1.01$. The middle row compares the relative $H^{-1}$-norm of the residual $\|r_{h,k}\|_{H^{-1}}/\|r_{h,0}\|_{H^{-1}}$ for all four methods on a shared vertical scale. The bottom row shows the accepted line-search step sizes on a shared base-2 vertical scale. Picard is omitted from the bottom row because it does not use line search; for PN, the step size of the Newton substep is shown. Histories extending beyond a panel's horizontal range are truncated; complete iteration counts are given in \cref{tab:p-Lapl-3d}.}
    \label{fig:p-Lapl-3d}
\end{figure}

\begin{table}[t]
    \centering
    \caption{Iterations required for convergence of the various methods on the locally refined mesh with $N=16{,}848$ tetrahedra, using linear (CG1) and quadratic (CG2) finite elements.
    The ``n.c.'' means that the prescribed tolerance was not reached within $500$ iterations.}
    \label{tab:p-Lapl-3d}

    \begin{tabular}{llrrr}
        \toprule
        Discretization & Method & $p=1.6$ & $p=1.1$ & $p=1.01$ \\
        \midrule
        CG1 & SN  & 10 & 43 & 257 \\
            & LN   & 7  & 15 & 21  \\
            & P          & 15 & 102 & n.c. \\
            & PN  & 8  & 30 & 50  \\
        \addlinespace
        CG2 & SN & 14 & 47  & 242 \\
            & LN   & 7  & 15  & 24  \\
            & P          & 15 & 105 & n.c. \\
            & PN  & 8  & 26  & 44  \\
        \bottomrule
    \end{tabular}
\end{table}

\section{Generalization to $p$-Stokes problems}
\label{sec:p-stokes}
We now turn to the $p$-Stokes problem, a generalization of the $p$-Laplacian of \cref{sec:p-laplacian} obtained by coupling the same type of power-law nonlinearity to an incompressibility constraint \cite{BulicekGwiazdaMalekEtAl12, CuffeyPaterson10}.
The $p$-Stokes system describes the slow, creeping flow of a generalized Newtonian (power-law) fluid. It differs from the $p$-Laplace problem in that the unknown is a divergence-free vector field. For $1<p< 2$, such fluids are said to be shear-thinning. The resulting near-singular viscosity makes Newton's method slow and fragile, with iteration counts that grow with 
mesh refinement and as $p$ gets close to $1$
\cite{schmidt-pStokes-global,MM-pNavierStokes}. We next specify the $p$-Stokes problem and then follow the same lift--transform--linearize--eliminate procedure as in \cref{ssec:newton}.

Let $\Omega\subset\mathbb{R}^{d}$, $d\in\{2,3\}$, be a bounded Lipschitz domain with boundary $\partial\Omega$.
In the $p$-Stokes problem, we seek a velocity field
$\bs{u}:\Omega\to\mathbb{R}^{d}$ and a pressure
$\pi:\Omega\to\mathbb{R}$ satisfying
\begin{subequations}
\label{eq:p-stokes-strong}
\begin{align}
-\nabla\!\cdot\!\bs{S}\!\left(\bs{D}\bs{u}\right) + \nabla\pi
   &= \bs{f}            && \text{in }\Omega,
   \label{eq:p-stokes-mom}\\
\nabla\!\cdot\!\bs{u}
   &= 0                 && \text{in }\Omega,
   \label{eq:p-stokes-mass}\\
\bs{u} &= \bs{u}_{\mathrm D} && \text{on }\partial\Omega,%
\end{align}
\end{subequations}
where
$\bs{D}\bs{u}:=\tfrac{1}{2}\bigl(\nabla\bs{u}+\nabla\bs{u}^\top\bigr)$
is the symmetric velocity gradient (strain rate tensor) and
$\bs{f}$, $\bs{u}_{\mathrm D}$ are prescribed
data. Clearly, more complicated boundary conditions are possible, but we restrict ourselves to Dirichlet conditions for simplicity.
The deviatoric extra-stress tensor $\bs{S}$ is given by
a power-law constitutive relation
\begin{equation*}
\bs{S}(\bs{D}\bs{u})\;=\;2\,\nu_0 |\bs{D}\bs{u}|^{p-2}\bs{D}\bs{u},
\qquad
\qquad p\in(1,\infty),
\end{equation*}
with $\nu_{0}>0$ and $|\bs{D}\bs{u}|^{2}:=\bs{D}\bs{u}\!:\!\bs{D}\bs{u}$ is the second invariant of the strain rate tensor.

As in \cref{sec:p-laplacian}, we introduce a small parameter $\varepsilon>0$ to regularize the second invariant of the strain-rate tensor, thereby avoiding the singularity for $p<2$. Specifically, we set
$|\bs{D}\bs{u}|_\varepsilon := \sqrt{|\bs{D}\bs{u}|^2+\varepsilon^2}$, and define the associated regularized stress by $\bs{S}_\varepsilon(\bs{D}\bs{u}) := 2\nu_0|\bs{D}\bs{u}|_\varepsilon^{p-2}\bs{D}\bs{u}$. Analogously, we introduce the tensor-valued variable $\bs{\Lambda}(\bs{x}) := \bs{S}_\varepsilon(\bs{D}\bs{u})(\bs{x})$ and consider the following regularized, lifted, and transformed system for $(\bs{u}, \pi, \bs{\Lambda})$:
\begin{subequations}
\label{eq:p-stokes-lifted}
\begin{alignat}{2}
-\nabla\!\cdot\!\bs{\Lambda} + \nabla\pi
   &= \bs{f}         \quad   && \text{in }\Omega,
   \label{eq:p-stokes-lifted-mom}\\
\nabla\!\cdot\!\bs{u}
   &= 0                 && \text{in }\Omega,
   \label{eq:p-stokes-lifted-mass}\\
   \bs{\Lambda} |\bs{D}\bs{u}|_\varepsilon^{2-p} - 2\nu_0\bs{D}\bs{u} &= 0 && \text{in }\Omega,
\end{alignat}
\end{subequations}
together with boundary conditions as in \eqref{eq:p-stokes-strong}.
Following the same procedure as in \cref{ssec:newton}, we linearize \eqref{eq:p-stokes-lifted} around $(\bs{u}, \pi, \bs{\Lambda})$ in the direction $(\bs{\hat{u}}, \hat{\pi}, \bs{\hat{\Lambda}})$. Then, by isolating the increment $\bs{\hat{\Lambda}}$ and eliminating it from the system, we arrive at the following linearized system for the velocity/pressure increment $(\bs{\hat u}, \hat\pi)$:
\begin{subequations} \label{eq:p-stokes-tangent}
\begin{alignat}{1}
\label{eq:p-stokes-tangent1}
- \nabla\cdot \left( |\bs{D}\bs{u}|_\varepsilon^{p-2}\! \left(2\nu_0\bs{D}\bs{\hat{u}} \!+\! (p\!-\!2) \left({\bs{W}_{\!\bs\Lambda}} \otimes \bs{W}_{\!\varepsilon} \right):\bs{D}\hat{\bs{u}}\right) \right) + \nabla\hat{\pi} &= \bs{f} + \nabla\!\cdot\!\bs{S}_\varepsilon\!\left(\bs{D}{\bs{u}}\right) \!-\! \nabla\pi,\\
\nabla \cdot \hat {\bs u} &= 0\label{eq:p-stokes-tangent2}
\end{alignat}
\end{subequations}
where $\bs{W}_{\!\varepsilon} := \bs{D}\bs{u}/|\bs{D}\bs{u}|_\varepsilon$ and ${\bs{W}}_{\!\bs\Lambda} := \bs{\Lambda}/|\bs{D}\bs{u}|_\varepsilon^{p-1}$, and where a rank-one fourth-order tensor acts on a second-order tensor as $(\bs A\otimes\bs B):\bs C := \bs A\,(\bs B\!:\!\bs C)$.
Note the direct analogy
with~\eqref{eq:lifted-linear-sys}: for $2\nu_0 = 1$, the terms $\nabla u$, $\bs w_\varepsilon$ and $\bs w_{\bs\lambda}$ in the $p$-Laplace problem play the role of $\bs{D}\bs{u}$, $\bs{W}_{\!\varepsilon}$ and $\bs{W}_{\!\bs\Lambda}$, respectively. The matrix--vector product $(\bs w_{\bs\lambda}\bs w_\varepsilon^\top)\nabla\hat u$ is replaced by its tensorial counterpart.

Writing
\begin{equation}\label{eq:p-stokes-Mk}
  \mathbb{M}:=|\bs{D}\bs{u}|_\varepsilon^{p-2}\Bigl(2\nu_0\,\mathbb{I}
  +(p-2)\,\sym\bigl(\bs{W}_{\!\bs\Lambda}\otimes\bs{W}_{\!\varepsilon}\bigr)\Bigr)
\end{equation}
for the resulting fourth-order diffusion tensor, where $\mathbb{I}$ is the identity on symmetric second-order tensors, the increment of the lifting variable is recovered pointwise from
\begin{equation}\label{eq:p-stokes-Lambda-hat}
  \bs{\hat\Lambda}:=\mathbb{M}:\bs{D}\bs{\hat u}-\bs{\Lambda}+\bs{S}_\varepsilon(\bs{D}\bs{u}),
\end{equation}
in analogy with \eqref{eq:lambda-hat}. Substituting \eqref{eq:p-stokes-Lambda-hat} into the linearized momentum equation cancels $\bs{\Lambda}$ from the right-hand side, which is why only $\bs{S}_\varepsilon(\bs{D}\bs{u})$ appears in \eqref{eq:p-stokes-tangent}; as in \cref{ssec:lifted-algorithm}, the lifting variable influences the velocity increment only through $\mathbb{M}$.

The two modifications of \cref{ssec:modifications} that guarantee the well-posedness of \eqref{eq:p-stokes-tangent} also carry over. The symmetrization $\sym(\bs{W}_{\!\bs\Lambda}\otimes\bs{W}_{\!\varepsilon}):=\tfrac12(\bs{W}_{\!\bs\Lambda}\otimes\bs{W}_{\!\varepsilon}+\bs{W}_{\!\varepsilon}\otimes\bs{W}_{\!\bs\Lambda})$ used in \eqref{eq:p-stokes-Mk} leaves the quadratic form $\bs{E}\!:\!\mathbb{M}\!:\!\bs{E}$ unchanged, but makes $\mathbb{M}$ self-adjoint, so that the velocity block of the saddle-point system \eqref{eq:p-stokes-tangent} is symmetric. The feasibility bound is imposed on $\bs{W}_{\!\bs\Lambda}$ as in \eqref{eq:safeguard}, but scaled by the viscosity constant: with $2-p<\beta<1$ as in \eqref{eq:beta-window}, we enforce
\begin{equation*}
  |\bs{W}_{\!\bs\Lambda}(\bs x)|\le\tfrac{2\nu_0\,\beta}{2-p}
  \qquad\text{for }\bs x\in\Omega
\end{equation*}
by replacing $\bs{\Lambda}$ in \eqref{eq:p-stokes-Mk} by its pointwise projection onto the ball of radius $\rho(\bs x):=\tfrac{2\nu_0\beta}{2-p}|\bs{D}\bs{u}(\bs x)|_\varepsilon^{p-1}$, as in \eqref{eq:discrete-lambda-projection}. The iterate $\bs{\Lambda}$ itself remains unchanged.
Since $|\bs{W}_{\!\varepsilon}|\le1$, this yields the  bounds
\begin{equation*}
  2\nu_0(1-\beta)\,|\bs{D}\bs{u}|_\varepsilon^{p-2}|\bs{E}|^2
  \;\le\;\bs{E}\!:\!\mathbb{M}\!:\!\bs{E}\;\le\;
  2\nu_0(1+\beta)\,|\bs{D}\bs{u}|_\varepsilon^{p-2}|\bs{E}|^2
\end{equation*}
for every symmetric $\bs E$, the direct counterpart of \eqref{eq:tensor-sandwich}.
The velocity block is therefore uniformly elliptic for every choice of $\bs{\Lambda}$, and standard Stokes solvers and preconditioners can be applied.

\section{Numerical results: $p$-Stokes}\label{sec:p-Stokes-numerics}

We discretize the incompressible Stokes equations using Taylor--Hood finite elements, i.e., we represent the velocity as a continuous piecewise quadratic vector field and the pressure as a continuous linear scalar field.
In this section, we present results for a stationary lid-driven cavity $p$-Stokes problem in \cref{ssec:lid-driven} and for %
a time-dependent problem modeling the $p$-Navier--Stokes flow around a cylinder in \cref{ssec:p-NS}. For brevity, we limit our comparison to the standard Newton (SN) and lifted Newton (LN) methods and examine how these methods behave under mesh refinement, as well as in a time-dependent setting where good initializations for the iterative solvers are available at each time step.

Our implementation again uses the Firedrake finite element library and is available from the same repository as the $p$-Laplace implementation. Analogously to before, the lifting variable $\bs\Lambda$ is not implemented as a finite element function. Instead, it is treated purely algebraically at the quadrature points. As before, we use the numerical parameters $\varepsilon=10^{-6}$, $\beta=1-10^{-4}$, $\gamma=0$. 
In each iteration of both methods, the resulting linear Stokes subproblems are again solved with a direct solver. Because these are standard symmetric saddle-point systems stemming from a stable Stokes finite element discretization, they are also amenable to existing iterative saddle-point preconditioners and solvers.

\subsection{Lid-driven cavity flow}\label{ssec:lid-driven}
First, we consider an incompressible lid-driven cavity $p$-Stokes flow problem. We consider \eqref{eq:p-stokes-strong} on $\Omega = (0,1)^2$, with $\bs f=0$ and $\nu_0=1/2$, and with the (smoothed) driving lid Dirichlet data $\bs{u}_D = [\frac{1}{2}(1 - \cos(2\pi x)), 0]^\top$ on the top boundary and the Dirichlet condition $\bs{u}_D = \bs{0}$ on the remaining boundaries. In these computations, we set the solution to the lid-driven cavity problem with $p=2$ as our initial guess in both the standard and lifted Newton methods. This avoids the failure of the nonlinear solvers for small $p$ values.

The flow field and the solver performance for different meshes and values of $p$ are shown in \cref{fig:lid-driven}. First, we observe that the driven flow close to the top boundary localizes in a very narrow boundary layer for $p=1.01$. Furthermore, as the table indicates, the lifted method requires only moderately more iterations as $p$ decreases, and its iteration counts are largely insensitive to the mesh: for $p=1.5$ it converges in four or five iterations on all three meshes, whereas SN grows from $19$ to $47$ iterations under the same refinement. The number of SN iterations rises steeply as $p$ approaches $1$; on the finest mesh, it requires $285$ iterations for $p=1.01$, compared with $22$ for LN, a factor of more than $13$. Both methods show a mild mesh dependence at $p=1.01$, but at levels that differ by more than an order of magnitude.

\begin{figure}[t]
    \centering
    \begin{tikzpicture}
    \def\LidPanelWidth{0.242\textwidth}
    \definecolor{lidConvBlue}{HTML}{0077BB}
    \definecolor{lidConvOrange}{HTML}{EE7733}
    \pgfplotstableread{data/lid_iteration_p150.txt}\lidIterationPOneFive
    \pgfplotstableread{data/lid_iteration_p101.txt}\lidIterationPOneOhOne
    \pgfplotsset{
        lid convergence axis/.style={
            width=\LidPanelWidth,
            height=0.215\textwidth,
            scale only axis,
            axis line style={black!65, line width=0.35pt},
            tick style={black!65, line width=0.35pt},
            grid=major,
            major grid style={black!10, line width=0.25pt},
            xlabel={Iteration},
            label style={font=\footnotesize},
            tick label style={font=\footnotesize},
            scaled ticks=false,
            unbounded coords=jump,
            filter discard warning=false,
            ymode=log,
            ymin=5e-8, ymax=2,
            ytick={1e-7,1e-5,1e-3,1e-1,1e0},
        },
        lid convergence curve/.style={
            solid,
            no marks,
            line width=0.85pt,
            line cap=round,
            line join=round,
        },
        lid standard curve/.style={
            lid convergence curve,
            color=lidConvBlue,
        },
        lid lifted curve/.style={
            lid convergence curve,
            color=lidConvOrange,
        },
        lid convergence legend/.style={
            draw=black!25,
            fill=white,
            rounded corners=1pt,
            font=\footnotesize,
            cells={anchor=west},
            legend image code/.code={
                \draw[
                    /pgfplots/mesh=false,
                    bar width=3pt,
                    bar shift=0pt,
                    mark repeat=2,
                    mark phase=2,
                    ##1
                ] plot coordinates {
                    (0cm,0cm) (0.125cm,0cm) (0.25cm,0cm)
                };
            },
            column sep=0.25em,
            row sep=-1.5pt,
            inner xsep=1.5pt,
            inner ysep=1.5pt,
        },
    }
        \begin{groupplot}[
            group style={
                group size=2 by 1,
                horizontal sep=0.085\textwidth,
            },
            lid convergence axis,
        ]
        \nextgroupplot[
            ylabel={relative residual norm},
            xmin=0, xmax=50,
            xtick={0,10,20,30,40,50},
            legend columns=1,
            legend style={
                lid convergence legend,
                at={(0.985,0.985)},
                anchor=north east,
            },
        ]
        \addplot[lid standard curve]
            table[x=iteration, y=PStokes] {\lidIterationPOneFive};
        \addlegendentry{SN}
        \addplot[lid lifted curve]
            table[x=iteration, y=PStokesLifted] {\lidIterationPOneFive};
        \addlegendentry{LN}

        \nextgroupplot[
            xmin=0, xmax=50,
            xtick={0,10,20,30,40,50},
            yticklabels=\empty,
        ]
        \addplot[lid standard curve]
            table[x=iteration, y=PStokes] {\lidIterationPOneOhOne};
        \addplot[lid lifted curve]
            table[x=iteration, y=PStokesLifted] {\lidIterationPOneOhOne};
        \end{groupplot}

        \node[anchor=south, inner sep=0pt] (lidSol150)
            at ([yshift=0.025\textwidth]group c1r1.north)
            {\includegraphics[
                width=\LidPanelWidth,
                trim=10.5bp 10.5bp 10.5bp 10.5bp,
                clip
            ]{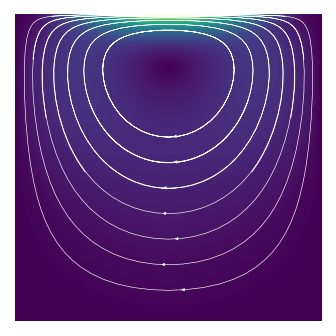}};
        \node[anchor=south, inner sep=0pt] (lidSol101)
            at ([yshift=0.025\textwidth]group c2r1.north)
            {\includegraphics[
                width=\LidPanelWidth,
                trim=10.5bp 10.5bp 10.5bp 10.5bp,
                clip
            ]{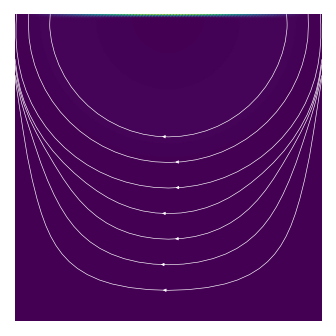}};
        \node[anchor=south, font=\small, inner sep=0pt]
            at ([yshift=3pt]lidSol150.north) {$p=1.5$};
        \node[anchor=south, font=\small, inner sep=0pt]
            at ([yshift=3pt]lidSol101.north) {$p=1.01$};

        \node[anchor=west, inner sep=0pt] (lidColorbar)
            at ([xshift=0.010\textwidth]lidSol101.east)
            {\includegraphics[
                height=\LidPanelWidth,
                trim=10.5bp 14.25bp 37.5bp 14.25bp,
                clip
            ]
                {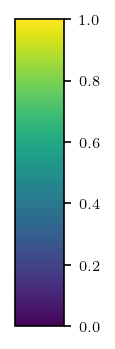}};
        \foreach \position/\value in {
            0/{$0.0$},.2/{$0.2$},.4/{$0.4$},
            .6/{$0.6$},.8/{$0.8$},1/{$1.0$}
        }{
            \node[anchor=west, font=\footnotesize, inner sep=0pt]
                at ([xshift=3pt]$(lidColorbar.south east)!\position!
                    (lidColorbar.north east)$) {\value};
        }
        \node[anchor=south, font=\footnotesize, inner sep=0pt]
            at ([yshift=3pt]lidColorbar.north)
            {$|\bs u|$};
    \end{tikzpicture}

    \par\vspace{1.5ex}
    {\small
    \begin{tabular}{llrrrr}
    \toprule
    Cells & Method & $p=1.5$ & $p=1.1$ & $p=1.05$ & $p=1.01$ \\
    \midrule
    $N=2.5\times10^3$ & SN & 19 & 49 & 59 & 228 \\
                      & LN & 4 & 12 & 15 & 18 \\
    \addlinespace
    $N=10^4$          & SN & 29 & 51 & 67 & 263 \\
                      & LN & 5 & 12 & 18 & 21 \\
    \addlinespace
    $N=4\times10^4$   & SN & 47 & 50 & 69 & 285 \\
                      & LN & 5 & 12 & 15 & 22 \\
    \bottomrule
    \end{tabular}}
    \caption{Solutions and convergence behavior for the lid-driven cavity $p$-Stokes problem. We consider different values of $p$ in $(1,2)$ and meshes with increasing numbers of cells $N$. The top row shows the velocity magnitude and streamlines for $p=1.5$ and $p=1.01$ on the most refined mesh, with a shared color scale for $|\bs u|$. The bottom row shows the relative $H^{-1}$ norm of the equation residual for SN and LN on the same mesh, using a shared vertical scale. The table reports the number of iterations for all values of $p$ and various mesh sizes.}
    \label{fig:lid-driven}
\end{figure}

\subsection{Flow around a cylinder for the $p$-Navier--Stokes equations} \label{ssec:p-NS}
As a second numerical example, we solve the time-dependent $p$-Navier--Stokes (p-NS) equations:
\begin{align*}
    \frac{\partial \bs{u}}{\partial t} + (\bs{u}\cdot\nabla)\bs{u}-\nabla\!\cdot\!\bs{S}\!\left(\bs{D}\bs{u}\right) + \nabla\pi
   = \bs{f} \quad \text{ on } \Omega,
\end{align*}
together with the divergence-free condition $\nabla\cdot \bs u=0$. Unlike the steady $p$-Stokes problem, the $p$-NS equations are time-dependent, and the convection term  $(\bs{u}\cdot\nabla)\bs{u}$ introduces an additional non-linearity.
For the $p$-NS equation, we build a LN method exactly as for the $p$-Stokes equation, i.e., by introducing a lifting variable that represents the deviatoric stress tensor $\bs{S}(\bs{D}\bs u)$. We discretize in time using the implicit Euler scheme. 
Following \cite{MM-pNavierStokes}, we compute the two-dimensional flow around a cylinder, see the left panels in \cref{fig:pstokes}, for $p=1.25, 1.1, 1.01$. The domain is of length $L = 2.2$ and height $H = 0.41$, with the cylinder of radius $0.05$ centered at $(0.2,0.2)$. We enforce the velocity boundary conditions 
\begin{align*}
    \bs{u}(x,y) = \omega(t) \left( \begin{array}{c}
         1 - \left( 1 - \frac{2y}{H}\right)^2 \\
          0
    \end{array}\right)
\end{align*}
on the left boundary, with $\omega(t) = \frac{1}{2}(1 - \cos{(\pi t)})$ for $t<1$ and $\omega(t) = 1$ for $t>1$. On the top and lower boundaries, we set $\bs{u} = 0$, and on the right boundary, $\bs{S}\!\left(\bs{D}\bs u\right)\bs{n} - \pi\,\bs{n} = 0$. As initial condition, we set the fluid velocity to zero.
We run the simulation up to time $t = 8$ for $3{,}392$ time steps on a mesh of $4{,}961$ cells. To be able to observe vortex shedding, we set $\nu_0 = 0.001$, which results, for $p=2$, in a nominal Reynolds number of about $100$.
These computations are challenging for two reasons: (1) For values of $p$ close to 1, enforcing the incoming velocity boundary condition (a parabolic profile) makes the problem stiff. 
(2) The nonlinear convection term is a potential source of numerical instabilities. We solve these issues as done in \cite{MM-pNavierStokes}: for (1), we enforce boundary conditions using Nitsche's method.
For (2), we stabilize our discretization with the so-called continuous interior penalty (CIP) method \cite{burman2007}. This method penalizes discontinuities in the velocity gradients.

Information on the iterations at each time step can be found in  \cref{fig:pstokes}. For these computations, convergence of the Newton methods is measured in terms of the relative $\ell^2$ norm of the discrete residual. For both SN and LN, the time steps that require the largest number of Newton iterations are those for $t < 4$, when the solution experiences the biggest changes at each time step. In these initial time steps, the number of iterations required by SN increases substantially as $p$ approaches $1$. Conversely, for LN, this increase is mild. For $t > 4$, both Newton methods require a similar number of iterations.

The advantage of LN is therefore concentrated in the stiff regime. At $p=1.25$, the two methods are essentially indistinguishable in cost, whereas at $p=1.01$, the mean drops from $9.86$ to $4.12$. Accumulated over the $3{,}392$ time steps, this amounts to roughly $33{,}400$ linear solves for SN compared to $14{,}000$ for LN, a reduction by a factor of about $2.4$. The peak cost per time step is affected far more strongly. At $p=1.01$, the maximum number of iterations in a single step falls from $140$ to $24$, which makes the cost of the lifted iteration substantially more predictable in a time-stepping context.

\begin{figure}[t]
  \centering
  \includegraphics[width=0.98\textwidth]{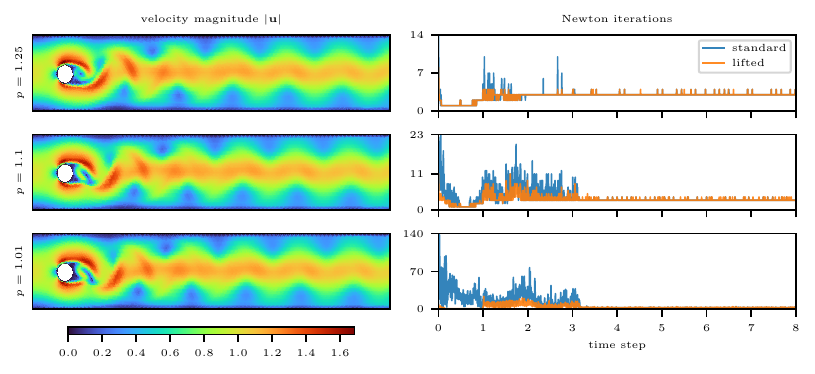}
    {\small
    \begin{tabular}{lrrr}
    \toprule
    Method & $p=1.25$ & $p=1.1$ & $p=1.01$ \\
    \midrule
    \makebox[2em][l]{SN}max  & 14   & 23   & 140 \\
    \makebox[2em][l]{}mean   & 2.78 & 3.57 & 9.86 \\
    \addlinespace
    \makebox[2em][l]{LN}max  & 8    & 11   & 24 \\
    \makebox[2em][l]{}mean   & 2.77 & 3.08 & 4.12 \\
    \bottomrule
    \end{tabular}}
  \caption{
  In the top left figures above, each row presents the velocity magnitude field $|\bs{u}|$ at $t = 8$. The top right figure shows the number of iterations per time step for three different $p$ values. In the table, for each $p$ we indicate the maximum and mean number of Newton iterations in time for SN and LN.}
  \label{fig:pstokes}
\end{figure}

\section{Discussion and extension}
We finish with a discussion of the main findings and possible extensions.

\noindent
\emph{Continuation.}
The standard remedy for the $p$-dependence of Newton's and Picard's
methods is continuation in $p$ \cite{HuangLiLiu07}, and sometimes in
$\varepsilon$. One solves a sequence of auxiliary problems, starting from
$p=2$ (or a large $\varepsilon$) and moving the parameters 
towards the target, using each solution to initialize the next problem.
This is a globalization by proximity to the solution, and its cost is
the sum of all intermediate solves.  None of the results presented here use continuation;
since the lifted iteration counts are largely insensitive to $p$ and $h$, there is no need for it.

\noindent
\emph{Krylov solvers.}
All linear systems reported here are solved directly.  For larger
problems in three dimensions, the structure of the
lifted tangent is favorable for iterative solvers.  By
\eqref{eq:tensor-sandwich}, the tensor $M_k$ is spectrally equivalent
to the Picard operator $|\nabla u_k|_\varepsilon^{p-2}I$, with
constants $1-\beta$ and $1+\beta$ that are independent of the mesh and of
$\varepsilon$. Any preconditioner that is spectrally equivalent to the discretized Picard operator is therefore a good candidate.
Following standard Newton--CG arguments, if the linear system in step (ii) of
\cref{ssec:lifted-algorithm} is solved by CG initialized by zero, every
CG iterate $\tilde u$ is a descent direction, and the Armijo
search \eqref{eq:armijo} remains well defined.

\noindent
\emph{Symmetrized versus non-symmetrized lifted Newton operator.}
The symmetrization \eqref{eq:lifted-tensor-sym} of the rank-one term
was introduced in \cref{ssec:modifications} for convenience only.  Since
$\bs z^\top B\bs z=\bs z^\top\sym(B)\bs z$ for every $B\in\mathbb
R^{d\times d}$, the quadratic form, along with the coercivity bounds
\eqref{eq:tensor-sandwich}, the descent property of the increment, and
the global convergence proof are identical for the symmetrized and the
nonsymmetric tensor $I+(p-2)\bs w_{\bs\lambda}\bs w_\varepsilon^\top$.
We have
compared both variants on the examples of \cref{sec:numerics-p-laplace}
and found the iteration histories to be nearly indistinguishable for most cases with moderate $p$, with
iteration counts that differ by at most one or two. More substantial differences are seen as $p\to 1$. For example, for the two-dimensional $p$-Laplace problem with $p=1.01$ and $N=25$, we obtain $23$ iterations with and $40$ iterations without symmetrization. The symmetrized
form is preferable in practice for other reasons as well. It allows us to use symmetric direct and iterative solvers,
and it is the form for which the descent
property of inexact CG steps discussed above holds.

\noindent
\emph{The case $p>2$.}
For $p>2$ the coefficient $|\nabla u|^{p-2}$ vanishes rather than blows up where $\nabla u=0$, so
no regularization is needed to define the residual, and the standard
Newton tensor
is uniformly positive definite.  The
difficulties for large $p$ are of a different kind: the diffusion
coefficient degenerates in flat regions, its dynamic range grows like
$|\nabla u|^{p-2}$ and leads to ill-conditioned systems and round-off
sensitivity, and the energy $J$ becomes increasingly non-quadratic in
steep regions.
The transformation \eqref{eq:extrem2-mod} is also the wrong one for $p>2$:
multiplying the constitutive relation by $|\nabla u|^{2-p}$ introduces
a singularity at $\nabla u=0$ where the original relation is perfectly
smooth.  The natural counterpart exchanges the roles of the primal and
dual variables and lifts the dual constitutive relation
$\nabla u=|\bs\lambda|^{q-2}\bs\lambda$ with $q=p/(p-1)\in(1,2)$, i.e.,
one applies the approach of this paper to the dual problem of
\cref{sec:duality}, in which the exponent $q$ now lies in the range
where the transformation is beneficial.
A systematic study of
lifting for $p>2$ is left for
future work.

\noindent
\emph{Open questions.}
Since $\beta>2-p$, the coercivity constant guaranteed by
\eqref{eq:tensor-sandwich} satisfies $1-\beta<p-1$, so our bounds
degrade as $p\to 1$ at the same rate as those for standard Newton.  The
observed iteration counts do not, and a local convergence analysis
that quantifies this gap, ideally with constants independent of $p$, remains open.  Equally open is a convergence result in the
infinite-dimensional setting or with $\varepsilon\to0$; however, this is a challenge that the lifted method shares with the standard variant.

\bmhead{Acknowledgements}
This work was supported in part by the US National Science Foundation (NSF) under awards \#2343866, \#2411229 and \#2411349.

\nocite{snbibopts}
\bibliography{p_laplace}

\appendix

\section{Alternative projection of the lifting variable}
\label{app:project}
In the method of \cref{ssec:lifted-algorithm}, the projection \eqref{eq:discrete-lambda-projection} is applied only where 
the lifting variable enters the diffusion tensor, while the iterate $\bs\lambda_k$ itself is evolved unconstrained. An alternative is to project the iterate as well, i.e., to replace step~(v) by
\begin{equation}\label{eq:lambda-projected-iterate}
  u_{k+1}:=u_k+\alpha_k\hat u_k,
  \qquad
  \widetilde{\bs\lambda}_{k+1}:=\bs\lambda_k+\alpha_k\hat{\bs\lambda}_k,
  \qquad
  \bs\lambda_{k+1}:=\frac{\widetilde{\bs\lambda}_{k+1}}{\max\bigl(1,\,|\widetilde{\bs\lambda}_{k+1}|/\rho_{k+1}\bigr)},
\end{equation}
with $\rho_{k+1}$ as in \eqref{eq:discrete-lambda-projection} evaluated at $u_{k+1}$, and to use $\bs\lambda_k$ itself in \eqref{eq:Mk}. As in \eqref{eq:discrete-lambda-projection}, the last relation is understood pointwise 
and gives $|\bs\lambda_{k+1}|=\min\bigl(|\widetilde{\bs\lambda}_{k+1}|,\rho_{k+1}\bigr)$. Since the feasibility bound then holds for the iterate itself, the projection in the definition of $M_k$ becomes unnecessary.

The two variants are not equivalent in the initial phase, but the analysis of \cref{ssec:p-Lapl-convergence} covers both. The primal statements of \cref{thm:finite-dimensional-lifted-global-convergence} use the projection 
only through the spectral bounds \eqref{eq:spectral-lower-upper}, which hold for either variant. For the flux convergence of corollary~\ref{cor:quadrature-flux-convergence} one notes in addition that the projection in \eqref{eq:lambda-projected-iterate} leaves $S_{k+1}:=|\nabla u_{k+1}|_\varepsilon^{p-2}\nabla u_{k+1}$ untouched, since $\beta>2-p$ implies
\begin{equation*}
  |S_{k+1}|
  =|\nabla u_{k+1}|_\varepsilon^{p-2}\,|\nabla u_{k+1}|
  \le |\nabla u_{k+1}|_\varepsilon^{p-1}
  \le\frac{\beta}{2-p}|\nabla u_{k+1}|_\varepsilon^{p-1}
  =\rho_{k+1}
\end{equation*}
pointwise. Since the scaling in \eqref{eq:lambda-projected-iterate} is
the pointwise projection onto the ball of radius $\rho_{k+1}$ and
hence nonexpansive, the recursion \eqref{eq:Ek-recursion} is preserved, and the conclusion is unchanged.  In practice, the two methods yield
almost identical behavior in terms of iteration numbers, as shown in \cref{tab:p-Lapl-2d-appendix}.

\begin{table}[ht]
    \centering
    \caption{Comparison of iterations required by lifted Newton method from \cref{ssec:lifted-algorithm} and the variant discussed here for the two-dimensional $p$-Laplace example of \cref{ssec:plaplace-2d}, using a mesh with $10{,}000$ triangles.}  \label{tab:p-Lapl-2d-appendix}
    \begin{tabular}{lrrrrr}
        \toprule
        Method & $p=1.6$ & $p=1.3$ & $p=1.1$ & $p=1.05$ & $p=1.01$ \\
        \midrule
        Lifted Newton from \cref{ssec:lifted-algorithm} & 11 & 10 & 15 & 19 & 21 \\
        Lifted Newton variant & 11 & 10 & 15 & 18 & 21 \\
        \bottomrule
    \end{tabular}
\end{table}

\section{One-dimensional finite difference example}
\label{app:one_dim_flux}

In \cref{ssec:robustness}, we give a qualitative explanation of the mechanisms behind the improved convergence of the lifted Newton method for $p$ close to one. To make this explanation more concrete, this appendix analyzes the standard and lifted Newton steps for a one-dimensional $p$-Laplace problem discretized with finite differences. The key observation is that both methods compute the \emph{same} increment of the flux $|\nabla u|_{\varepsilon}^{p-2}\nabla u$, and differ only in how this flux increment is converted into an increment of the gradient: standard Newton uses the tangent slope of the constitutive function, which is nearly flat for $p\approx 1$, and hence overshoots by a factor of $1/(p-1)$, whereas lifted Newton uses a secant slope and gets close to the solution quickly.

Consider the $p$-Laplace equation \eqref{eq:p-laplace} on $\Omega=(0,1)$ with homogeneous Dirichlet boundary conditions. We partition $\Omega$ using $n+2$ uniformly distributed nodes with spacing $h=1/(n+1)$ and denote by $\bs u=(u_1,\dots,u_n)^\top$ the interior degrees of freedom, with $u_0=u_{n+1}=0$. The finite difference gradient is $\bs y=B\bs u\in\mathbb R^{n+1}$, $y_i=(u_i-u_{i-1})/h$ for $i=1,\dots,n+1$. The discrete problem is the minimization of
\begin{equation*}
  J(\bs u):=\frac1p\sum_{i=1}^{n+1}|y_i(\bs u)|_\varepsilon^p-\bs f^{\!\top}\bs u,\qquad |y|_\varepsilon:=(y^2+\varepsilon^2)^{1/2}.
\end{equation*}
Let $g(t):=|t|_\varepsilon^{p-2}t$ denote the scalar constitutive function (the flux as a function of the gradient) and $\mathbf g(\bs y)$ the vector with components $g(y_i)$. The residual is $\bs r(\bs u)=B^\top\mathbf g(B\bs u)-\bs f$, with components
  $r_i=(g(y_i)-g(y_{i+1}))/{h}-f_i$.%

To obtain a problem with a plateau in the solution, we choose $\bs f$ such that the discrete solution is $\bs u^*=\bs 1_n:=(1,\dots,1)^\top$, i.e., $y_1^*=1/h$, $y_{n+1}^*=-1/h$ and $y_i^*=0$ for $i=2,\dots,n$. From $\bs r(\bs u^*)=0$ and $g(0)=0$ we obtain
  $f_1=f_n={g(1/h)}/{h}$ and $f_i=0$  for $i=2,\dots,n-1$,
so that the solution is flat in the sense of \cref{ssec:robustness} on all but the two boundary intervals.

Both Newton methods compute an increment $\hat{\bs u}$ from a linear system of the form
\begin{equation}\label{eq:appC-newton}
  B^\top D\,B\,\hat{\bs u}=-\bs r(\bs u),\qquad D=\operatorname{diag}(d_1,\dots,d_{n+1})\in\mathbb R^{(n+1)\times(n+1)},
\end{equation}
and differ only in the diagonal coefficients. For standard Newton, $B^\top D^S B$ is the Hessian of $J$, with
\begin{equation*}
  d_i^S:=g'(y_i)=|y_i|_\varepsilon^{p-2}\Bigl[1+(p-2)\frac{y_i^2}{|y_i|_\varepsilon^2}\Bigr].
\end{equation*}
For lifted Newton with lifting variable $\bs\lambda\in\mathbb R^{n+1}$, the one-dimensional counterpart of the diffusion tensor \eqref{eq:lifted-tensor-sym} is
\begin{equation*}
  d_i^L:=|y_i|_\varepsilon^{p-2}\Bigl[1+(p-2)\frac{\pi_u(\lambda_i)\,y_i}{|y_i|_\varepsilon^{p}}\Bigr],
\end{equation*}
where $\pi_u(\lambda_i)$ denotes the feasibility projection of $\lambda_i$, cf.~\eqref{eq:safeguard}. Note that $d_i^L=d_i^S$ if $\lambda_i=g(y_i)$, in accordance with \cref{ssec:newton}. The lifting variable is updated according to step (iii) of \cref{ssec:lifted-algorithm}, which, for a unit step, reads
\begin{equation}\label{eq:appC-lambda-update}
  \bs\lambda_{k+1}=\mathbf g(\bs y_k)+D^L\,\hat{\bs y}_k,\qquad \hat{\bs y}_k:=B\,\hat{\bs u}_k,
\end{equation}
and we initialize with $\bs\lambda_0=\bs 0$.

\subsection{Expansions for small values of $p-1$}
We introduce $\tau:=p-1$ and assume $\tau \ll 1$. Although $\varepsilon$ determines which regions are flat, we assume for simplicity that $|y_i|\gg\varepsilon$ for all $i$, so that $|y_i|_\varepsilon\approx|y_i|$; we found this to be accurate for the iterates in \cref{fig:fd_result}, where $\varepsilon=10^{-6}$. Since $|y|^{p-2}=|y|^{-1}|y|^{\tau}=|y|^{-1}(1+O(\tau))$ for $|y|$ bounded away from zero and infinity, to leading order in $\tau$,
\begin{equation}\label{eq:appC-expansions}
  g(y)\approx\operatorname{sign}(y),\qquad
  d^S\approx\frac{\tau}{|y|},\qquad
  d^L\approx\frac{1-\pi_u(\lambda)\operatorname{sign}(y)}{|y|},
\end{equation}
where we used $p-2=\tau-1$. The constitutive function is thus nearly the sign function: its tangent slope $d^S$ is of order $\tau$, while the secant slope $g(y)/y\approx1/|y|$ through the origin is of order one. Lifted Newton with $\pi_u(\lambda)=O(\tau)$ uses this secant slope; with $\lambda=g(y)$ it recovers the tangent slope.

We introduce the linearized flux increment $\hat{\bs z}:=D\,B\,\hat{\bs u}\in\mathbb R^{n+1}$, so that
  $\hat{y}_i=\hat z_i/d_i$ for $i=1,\dots,n+1$.
In terms of $\hat{\bs z}$, the Newton system \eqref{eq:appC-newton} reads $B^\top\hat{\bs z}=-\bs r$, i.e., $(\hat z_i-\hat z_{i+1})/h=-r_i$ for $i=1,\dots,n$, and therefore
\begin{equation}\label{eq:appC-flux-increment}
  \hat z_i=\hat z_1+h\rho_i,\qquad\rho_i:=\sum_{j=1}^{i-1}r_j,\qquad i=1,\dots,n+1 .
\end{equation}
In one dimension, the flux increment is thus determined by the residual up to a single constant $\hat z_1$. This constant is fixed by the homogeneous boundary conditions, which imply that $\hat u_{n+1}-\hat u_0=h\sum_{i=1}^{n+1}\hat{y}_i=0$, and therefore
\begin{equation}\label{eq:appC-z1}
  \hat z_1=-\sum_{i=1}^{n+1}w_ih\rho_i,\qquad w_i:=\frac{d_i^{-1}}{\sum_{\ell=1}^{n+1}d_\ell^{-1}},
\end{equation}
that is, $-\hat z_1$ is a convex combination of the values $h\rho_i$. Equations \eqref{eq:appC-flux-increment}--\eqref{eq:appC-z1} are therefore an explicit solution formula for \eqref{eq:appC-newton}, valid for any positive diagonal $D$.

\subsection{The flux increment is the same for both methods}
By the expressions for the residual $\bs r$ and the forcing $\bs f$, the sums in \eqref{eq:appC-flux-increment} telescope,
\begin{equation*}
  h\rho_i=g(y_1)-g(y_i)-hf_1=g(y_1)-g(1/h)-g(y_i),\qquad i=2,\dots,n,
\end{equation*}
and $h\rho_{n+1}=g(y_1)-g(y_{n+1})-2g(1/h)$. Close to the solution, $y_1>0$ and $y_{n+1}<0$, so that by \eqref{eq:appC-expansions} $g(y_1)-g(1/h)=O(\tau)$ and $g(y_{n+1})+g(1/h)=O(\tau)$. Thus,
\begin{equation}\label{eq:appC-rho}
  h\rho_1=0,\qquad h\rho_i\approx-g(y_i)\approx-\operatorname{sign}(y_i)\quad \text{for $i=2,\dots,n$},\qquad h\rho_{n+1}=O(\tau).
\end{equation}
To estimate the constant $\hat z_1$ in \eqref{eq:appC-z1}, let $\delta:=\max_i|u_i-1|$ denote the distance of the iterate from the solution and assume $\delta\ll1$. For both methods, $d_i^{-1}\propto|y_i|$ in the interior by \eqref{eq:appC-expansions}, while at the two boundary intervals $d_1^{-1}\approx d_{n+1}^{-1}\approx1/(\tau h)$ for standard Newton and for lifted Newton with $\lambda_1\approx g(y_1)$, $\lambda_{n+1}\approx g(y_{n+1})$, and $d_1^{-1}\approx d_{n+1}^{-1}\approx1/h$ for the first lifted Newton step. Using \eqref{eq:appC-rho} and $h\sum_{i=2}^n|y_i|\operatorname{sign}(y_i)=u_n-u_1$, the convex combination \eqref{eq:appC-z1} evaluates to
\begin{equation*}
\begin{aligned}
  \hat z_1&\approx\frac{u_n-u_1}{2+\mathrm{TV}(\bs u)}+O(\tau)&&\text{(standard Newton, first lifted Newton step)},\\
  \hat z_1&\approx\frac{\tau\,(u_n-u_1)}{2+\tau\,\mathrm{TV}(\bs u)}+O(\tau)&&\text{(later lifted Newton steps)},
\end{aligned}
\end{equation*}
with $\mathrm{TV}(\bs u):=h\sum_{i=2}^n|y_i|$. In all cases, $|\hat z_1|\lesssim\delta+O(\tau)$, which is small compared to the unit-size values in \eqref{eq:appC-rho}. Inserting into \eqref{eq:appC-flux-increment} yields, in the flat region,
\begin{equation}\label{eq:appC-z-flat}
  \hat z_i\approx-g(y_i)\approx-\operatorname{sign}(y_i),\qquad i=2,\dots,n,
\end{equation}
for both methods, i.e., they compute the same flux increment. As desired, the linearized flux $g(y_i)+\hat z_i$ after the step vanishes to leading order, which is the target value of the flux in the flat region.

\subsection{Comparison of the primal increments}
The two methods differ only in the conversion $\hat y_i = \hat z_i/d_i$ of the flux increment into a gradient increment. Combining \eqref{eq:appC-expansions} and \eqref{eq:appC-z-flat}, we find for $i=2,\dots,n$
\begin{equation*}
  \hat{y}_i^S\approx-\frac{|y_i|}{\tau}\operatorname{sign}(y_i)=-\frac{y_i}{\tau},
  \qquad
  \hat{y}_i^L\approx-\frac{|y_i|\operatorname{sign}(y_i)}{1-\pi_u(\lambda_i)\operatorname{sign}(y_i)}=-\frac{y_i}{1-\pi_u(\lambda_i)\operatorname{sign}(y_i)}.
\end{equation*}
With a unit step, standard Newton replaces $y_i$ by $y_i(1-1/\tau)$: every gradient in the flat region changes sign and grows in magnitude by a factor of approximately $1/\tau$. Since the residual $\bs r$ is dominated by $\operatorname{sign}(y_i)-\operatorname{sign}(y_{i+1})$, the residual changes sign as well, and the next step reverses the motion. This is the zig-zag behavior described in \cref{ssec:robustness} and visible in \cref{fig:fd_result}. A damping parameter $\alpha\approx\tau$ would be needed in the flat region to make the step useful, while the steep boundary intervals, where $g$ is well approximated by its tangent, allow for $\alpha\approx1$. The Armijo line search must compromise between the two, which explains the slow convergence of standard Newton on this problem. Lifted Newton with $\pi_u(\lambda_i)=O(\tau)$ instead gives $\hat{y}_i^L\approx-y_i$, so that a unit step sets the gradients in the flat region to zero up to $O(\tau)$.

\subsection{The lifting variable in the flat region}
It remains to justify that $\lambda_i=O(\tau)$ for $i=2,\dots,n$ and all $k\ge1$. By the definition of $\hat{\bs z}$, the update \eqref{eq:appC-lambda-update} of the lifting variable can be written as
\begin{equation*}
  \bs\lambda_{k+1}=\mathbf g(\bs y_k)+\hat{\bs z}_k,
\end{equation*}
that is, the new lifting variable is exactly the linearized flux after the step. Hence, by \eqref{eq:appC-expansions} and  \eqref{eq:appC-z-flat}, the components $\lambda_{k+1,i}$, $i=2,\dots,n$, of $\bs\lambda_{k+1}$ in the flat region are of order $\tau$.
This holds for all values of $\bs\lambda_k$, since \eqref{eq:appC-z-flat} does not depend on $D$. In particular, for the first step, $\bs\lambda_0=\bs 0$ gives $d_i^L=|y_i|_\varepsilon^{p-2}\approx1/|y_i|$, so that the first lifted Newton step coincides with a Picard step and equals $\tau$ times the first standard Newton step. After the step, $\lambda_i=O(\tau)$ in the flat region, \eqref{eq:appC-expansions} gives $d_i^L\approx1/|y_i|$ again, and the argument repeats. On the two boundary intervals, in contrast, $\lambda_1\approx g(y_1)\approx1$ and $\lambda_{n+1}\approx-1$ after the first step, so that there $d^L\approx d^S$ and lifted Newton reduces to standard Newton, which converges quadratically in the steep region.

To summarize, the two linearizations agree on what the flux should do. Standard Newton converts the flux increment into a primal increment using the tangent slope $\tau/|y|$ of the nearly flat function $g$ and hence overshoots by $1/\tau$, whereas lifted Newton uses the secant slope $1/|y|$ and lands on the flat solution, up to $O(\tau)$, in one step. This is the one-dimensional instance of the mechanism described in \cref{ssec:robustness}: the dual direction lags behind the primal direction and filters out the extreme sensitivity of $\bs w_\varepsilon$ in regions where the gradient is small.

\end{document}